\newif\ifTwoColumn
\newif\ifTechReport
\newcommand{\norm}[1]{\left\lVert#1\right\rVert}
\newcommand{\continuanceref}{}
\newcommand{\R}{\mathbb{R}}
\newcommand{\N}{\mathbb{N}}
\newcommand{\bbS}{\mathbb{S}}
\newcommand{\mc}[1]{\mathcal{#1}}
\newcommand{\eqdef}{\coloneqq}
\newcommand{\reqdef}{\eqqcolon}
\newcommand{\prob}[2][]{
	\ifthenelse{
		\isempty{#1}}{\mathbb{#2}}{\mathbb{#2}\left\{#1\right\}
	}
}
\newcommand{\expected}[2]{\mathbb{E}_{#1}\left[#2\right]}
\newcommand{\prox}{\mathrm{prox}}
\newcommand{\fix}{\mathrm{fix}}
\newcommand{\zer}{\mathrm{zer}}
\newcommand{\Id}{\mathrm{Id}}
\newcommand{\dom}{\mathrm{dom}}
\newcommand{\proj}{\mathrm{proj}}
\newcommand{\diag}{\mathrm{diag}}
\newcommand{\col}{\mathrm{col}}
\newcommand{\resolvent}[2]{\textrm{#1}_{#2}}
\newcommand{\set}[2]{\left\{ #1 :  #2  \right\}}
\newcommand{\inner}[2]{\langle #1, #2 \rangle}
\newcommand{\bs}[1]{\boldsymbol{#1}}
\newcommand{\bsone}{\boldsymbol{1}}
\newtheorem{theorem}{Theorem}[section]
\newtheorem{definition}[theorem]{Definition}
\newtheorem{lemma}[theorem]{Lemma}
\newtheorem{remark}[theorem]{Remark}
\newtheorem{standing}[theorem]{Standing Assumption}
\newtheorem{assumption}[theorem]{Assumption}
\newcommand{\normaltext}[1]{\textnormal{#1}}
\newacronym{iid}{i.i.d.}{independent and identically distributed}
\newacronym{wrt}{w.r.t.}{with respect to}
\newacronym{wlog}{w.l.o.g.}{without loss of generality}
\newacronym{PAC}{PAC}{probably approximately correct}
\newacronym{SNEP}{SNEP}{stochastic Nash equilibrium problem}
\newacronym{SNE}{SNE}{stochastic Nash equilibrium}
\newacronym{SGD}{SGD}{stochastic gradient descent}
\newacronym{NE}{NE}{Nash equilibrium}
\newacronym{GNE}{GNE}{generalized Nash equilibrium}
\newacronym{SGNEP}{SGNEP}{stochastic generalized Nash equilibrium problem}
\newacronym{KKT}{KKT}{Karush–Kuhn–Tucker}
\newacronym{FB}{FB}{forward-backward}
\newacronym{FBF}{FBF}{forward-backward-forward}
\newacronym{EG}{EG}{extragradient}
\newglossaryentry{VI}
{
	name={VI},
	description={variational inequality},
	first={\glsentrydesc{VI} (\glsentrytext{VI})},
	plural={VIs},
	descriptionplural={variational inequalities},
	firstplural={\glsentrydescplural{VI} (VIs)}
}
\newacronym{DR}{DR}{Douglas-Rachford}
\newacronym{SAA}{SAA}{sample average approximation}
\newacronym{SA}{SA}{stochastic approximation}
\newacronym{PEV}{PEV}{plug-in electric vehicle}
\newacronym{DSO}{DSO}{distribution system operator}
\begin{document}
	
\title{Finite-sample guarantees for data-driven forward-backward operator methods}
\author{Filippo Fabiani and Barbara Franci
\thanks{F. Fabiani is with the IMT School for Advanced Studies Lucca, Piazza San Francesco 19, 55100, Lucca, Italy ({\tt filippo.fabiani@imtlucca.it}). B. Franci is with the Department of Mathematical Sciences, Politecnico di Torino, Corso Duca degli Abruzzi 24, 10129, Torino, Italy ({\tt barbara.franci@polito.it}).}}

% The paper headers
%\markboth{Transactions on Automatic Control,~Vol.~XX, No.~X, XXXX 20XX}
%{Fabiani \MakeLowercase{\textit{et al.}}: A finite-sample perspective to the forward-backward operator splitting method}

\maketitle

%%%%%%%%%%%%%%%%%%%%%%%%%%%%%%%%%%%%%%%%%%%%%%%%%%%%%%%%%%%%%%%%%%%%%%%%
\begin{abstract}
	We establish finite sample certificates on the quality of solutions produced by data-based \gls{FB} operator splitting schemes. As frequently happens in stochastic regimes, we consider the problem of finding a zero of the sum of two operators, where one is either unavailable in closed form or computationally expensive to evaluate, and shall therefore be approximated using a finite number of noisy oracle samples. Under the lens of algorithmic stability, we then derive probabilistic bounds on the distance between a true zero and the \gls{FB} output without making specific assumptions about the underlying data distribution. We show that under weaker conditions ensuring the convergence of \gls{FB} schemes, stability bounds grow proportionally to the number of iterations. Conversely, stronger assumptions yield stability guarantees that are independent of the iteration count.
	We then specialize our results to a popular \gls{FB} stochastic Nash equilibrium seeking algorithm and validate our theoretical bounds on a control problem for smart grids, where the energy price uncertainty is approximated by means of historical data.
\end{abstract}

%%%%%%%%%%%%%%%%%%%%%%%%%%%%%%%%%%%%%%%%%%%%%%%%%%%%%%%%%%%%%%%%%%%%%%%%
\begin{IEEEkeywords}
	Data-driven methods, Robust decision-making, Operator splitting methods, Stochastic optimization. 
\end{IEEEkeywords}

%%%%%%%%%%%%%%%%%%%%%%%%%%%%%%%%%%%%%%%%%%%%%%%%%%%%%%%%%%%%%%%%%%%%%%%%
\IEEEpeerreviewmaketitle

%Reset all glossary terms so that the acronym definitions are given again
\glsresetall

%%%%%%%%%%%%%%%%%%%%%%%%%%%%%%%%%%%%%%%%%%%%%%%%%%%%%%%%%%%%%%%%%%%%%%%
\section{Introduction}\label{sec:intro}
\IEEEPARstart{F}{inding} a pervasive application in optimization and game theory, operator splitting methods can be generally employed to construct zeros of sums of operators as fixed point iterations \cite{ryu2016primer,bauschke2017correction}. Specifically, they offer an elegant framework and systematic tools for solving problems of the form:
\[
\text{Find }\omega^\star\in\Omega \text{ s.t. } 0\in\mc A(\omega^\star)+\mc B(\omega^\star),
\]
with decision set $\Omega\subseteq\R^n$, and mappings $\mc A:\Omega\to2^\Omega$, $\mc B:\Omega\to2^\Omega$. We then say that $\omega^\star$ is a \emph{zero} of $\mc A+\mc B$, i.e., it belongs to the set $\zer(\mc A+\mc B)\eqdef\set{\omega\in\R^n}{0\in\mc A(\omega)+\mc B(\omega)}$.
In this paper, we will specifically investigate the case in which the operator $\mc A$ is available and its evaluation, or that of the associated resolvent  $\resolvent{J}{\mc A}\eqdef(\Id+\mc A)^{-1}$ (e.g., a proximity operator), is reasonably cheap, while the operator $\mc B$ is either unavailable, or quite challenging to evaluate. The latter shall then be \emph{approximated} in a data-driven fashion by relying on a \emph{finite} number of data.

\subsection{Data-driven approximation methods and related work}
Especially in stochastic regimes, several optimization problems indeed require the evaluation of an expected value mapping, a quantity that can either be hard to compute, or even inaccessible in case the probability distribution of the random variable is unknown. As such, several data-driven approximation schemes have been proposed in the literature \cite{robbins1951,koshal2013,franci2020distributed,franci2021distributed,franci2021stochastic,fabiani2022stochastic,yousefian2017smoothing,shapiro2003monte,shapiro2021lectures,alacaoglu2022stochastic,iusem2017extragradient,gower2020variance}. The latter essentially assume that some batch of random, \gls{iid} samples is available, and then leverage approximation procedures that roughly fall within two large umbrellas: \gls{SAA} and \gls{SA}. In \gls{SAA}-based approaches, one replaces the expected value formulation with the average over a large number of samples of the random variable from a pool of already existing data \cite{shapiro2003monte,shapiro2021lectures}. Successively, the approximated, yet deterministic, problem is solved, and convergence to a solution to the original stochastic problem is proved \emph{when the number of samples grows to infinity}. In contrast, popular \gls{SA} schemes allow one to \emph{sample a realization of the random variable whenever needed}, e.g., at every algorithmic step, thereby originating fully stochastic procedures for computing an optimal solution asymptotically. While \gls{SA}-based algorithms are more computationally attractive compared to those leveraging \gls{SAA}, since they rely on a smaller number of samples at every iteration,
they usually require stronger assumptions on the problem data \cite{koshal2013,yousefian2017smoothing}. 
As a middle ground alternative, algorithms based on variance-reduced SA have been proposed. In particular, there are two available options: consider the average over an increasing number of samples, still drawn at every iteration \cite{franci2020distributed, franci2021stochastic,iusem2017extragradient}, or sample a large (but finite) batch only every so often while one single sample is drawn in the majority of iterations \cite{alacaoglu2022stochastic, gower2020variance}. With a few exceptions \cite{koshal2013, fabiani2022stochastic} however, available procedures prove convergence to an exact solution asymptotically, and hence in case the number of samples necessary for the approximation grows indefinitely. This amounts to a rather strong condition to be met in practice, unless one is able to generate \gls{iid} samples ``for free'' through Monte Carlo-based simulations.

To overcome this issue, several efforts have been made in trying to characterize the performance of iterative schemes when only finite information is available. This line of research goes in the direction of understanding the generalization properties of iterative methods, also referred to as \emph{algorithmic stability} analysis \cite{bousquet2002stability}. Roughly speaking, a stable learner is one for which the learned solution does not change much, \gls{wrt} some loss function employed for the ``training'', with small changes in the sample set. In light of the extensive use of \gls{SGD} for machine learning techniques,
% ranging from deep learning, adversarial training, minmax problems or simply optimization, many 
several works analyzed the stability properties of \gls{SGD} in different domains. For instance, \cite{hardt2016train}, which is considered one of the pioneer works on the stability analysis of \gls{SGD} for optimization, obtained stability bounds depending on the Lipschitz and strong convexity constants characterizing the cost function, also investigating the convex and non-convex cases, with resulting bounds that may depend on the number of iterations and learning rate. Similar results were obtained by \cite{bassily2020stability}, extended in \cite{xing2021algorithmic} and follow-up papers for adversarial training. More recently, instead, stability bounds of the \gls{SGD} were discussed also for \glspl{VI} \cite{zhao2024learning} and minmax problems \cite{lei2021stability, ozdaglar2022good}. Finally, a vision not strictly related to \gls{SGD} can be found in \cite{wu2019stability}, which considered distributed learning algorithms for big data. 
%Despite considering different performance metrics according to the application at hand, all these works share similarities on the assumptions made and obtain similar bounds. This is mostly related to the fact that they make use of roughly the same well-known concentration inequalities (e.g., the McDiarmid's one \cite{mcdiarmid1989method}) on which algorithmic stability theory is rooted.

%Although the relevance of these results is out of question, what we find lacking in the literature is a general analysis that goes beyond the single applications or the specific of the \gls{SGD}. For this reason, we propose here the stability analysis of the forward-backward operator splitting method: a unifying formulation for optimization, Nash equilibrium problems and saddle points problems \cite{ryu2016primer, bauschke2017correction,facchinei2003finite}.

\subsection{Summary of contribution}
In this paper we take a practical viewpoint. 
Inspired by a recent trend in system identification \cite{tsiamis2023statistical}, to approximate the operator $\mc B$ we assume to have available a finite number of \gls{iid} samples drawn from an unknown probability distribution with no restrictions. 
%This is indeed the case when samples are either expensive to obtain, both in terms of cost and time, or they come from historical data.
Then, by focusing on the \gls{FB} scheme in Algorithm \ref{alg:FB}, which is arguably the most used operator splitting technique yielding first-order iterative methods \cite{ryu2016primer,bauschke2017correction}, we ask ourselves:  how far can we get from some $\omega^\star\in\zer(\mc A+\mc B)$ by running such a scheme with an approximation of $\mc B$ exploiting a finite dataset? 

Let $\omega^{K+1}$ be the output of the resulting data-driven \gls{FB} obtained after $K$ iterations. Since exact convergence to a zero should not be expected in this limited information setup, our goal is hence to characterize $\|\omega^{K+1}-\omega^\star\|$ with rigorous data-based certificates. By leveraging tools proper of the algorithmic stability framework \cite{bousquet2002stability}, we make the following contributions:
\begin{enumerate}
	\item[(i)] We design a tailored loss function in a way that it is representative for our purposes, i.e., to characterize the distance of $\omega^{K+1}$ from some $\omega^\star$;
	\item[(ii)] By considering different monotonicity assumptions on the operators involved, we prove uniform stability of the data-driven \gls{FB} \gls{wrt} such a loss function. In line with what observed in convex optimization \cite{hardt2016train}, we note that:
	\begin{itemize}
		\item[--] The weaker conditions granting convergence of the \gls{FB} scheme lead to a stability bound proportional to $K$; 
		\item[--] Stronger assumptions make stability independent on the number of iterations performed.
	\end{itemize}
	As far as we know, none of the abovementioned works has applied algorithmic stability to operator splitting methods based on data, i.e., to a more general framework than mere convex optimization. Our analysis substantially departs from prior approaches, relying on monotone operator-theoretic tools that have not been addressed so far.
	\item[(iii)] We derive computable expressions for $\varepsilon\ge0$ that depend, among the others, on the amount of data available, so that $\|\omega^{K+1}-\omega^\star\|\le\varepsilon$ holds with arbitrarily high confidence and regardless of the distribution underlying the data;
	\item[(iv)] We apply our bounds to a popular \gls{SNE} seeking algorithm based on the \gls{FB} scheme, thereby establishing certificates on the distance between the output such algorithm produces and an \gls{SNE}.
\end{enumerate}
Our theoretical results, which align with and generalize existing literature on algorithmic stability-based approaches, are finally validated through numerical simulations. Specifically, we analyze a \gls{SNEP} modeling a control problem for smart grids, where energy price uncertainty is approximated by means of historical data.

\subsection{Paper organization and notation}
The rest of the paper is organized as follows: in \S \ref{sec:background} we formalize the data-driven problem addressed and provide preliminary concepts on algorithmic stability. In \S \ref{sec:FB} we discuss uniform stability properties for the \gls{FB} scheme \gls{wrt} a predefined loss function under two different sets of assumptions, as well as derive the resulting distribution-free probabilistic certificates. The latter are then specialized to a popular \gls{FB} stochastic Nash equilibrium seeking algorithm in \S \ref{sec:SGNEP}, while numerical simulations are finally conducted in \S \ref{sec:simulations}.

In the remainder we will use Standing Assumption to postulate properties that hold throughout the paper, while we refer to a specific Assumption only when needed.
\subsubsection*{Standard notation}
$\N$, $\R$ and $\R_{\geq 0}$ denote the set of natural, real, and nonnegative real numbers, respectively. $\N_0 \eqdef \N \cup \{0\}$, while $\bar\R\eqdef\R\cup\{+\infty\}$. $\bbS^{n}$ is the space of $n \times n$ symmetric matrices and $\bbS_{(\succcurlyeq) \succ 0}^{n}$ is the cone of positive (semi-)definite matrices.
The transpose of a matrix $A \in \R^{n \times n}$ is $A^\top$, while
$A \succ 0$ ($\succcurlyeq 0$) denotes its positive (semi)definiteness. For a vector $u \in \R^n$ and a matrix $A \succ 0$, $\|u\|$ denotes the standard Euclidean norm, while $\|\cdot\|_A$ the $A$--induced norm $\|u\|_A \coloneqq \sqrt{u^\top A u}$.
$I_{n}$, $\bsone_n$, and $\bs{0}_n$ denote the $n \times n$ identity matrix, the vector of all $1$, and $0$, respectively (we omit the dimension $n$ whenever clear). 
The operator $\col(\cdot)$ stacks its arguments in column vectors or matrices of compatible dimensions.
For example, given vectors $x_1,\dots,x_N$ with $x_i\in\mathbb{R}^{n_i}$ and $\mc I=\{1,\dots,N \}$, we denote $\bs{x} \eqdef (x_1 ^\top,\dots ,x_N^\top )^\top = \col((x_i)_{i\in\mc I}) \in \R^n$, $n \eqdef \sum_{i\in \mc I} n_i$, and $ \bs{x}_{-i} \eqdef \col(( x_j )_{j\in\mc I\setminus \{i\}})$. With a slight abuse of notation, we sometimes use also $\bs x = (x_i, \bs x_{-i})$.
The uniform distribution on $[a,b]$ is denoted by $\mc U(a,b)$, and the normal distribution with mean $\mu$ and variance $\sigma^2$ by $\mc N(\mu,\sigma^2)$. 
\subsubsection*{Operator-theoretic definitions}
Given a set $\mc X \subseteq \R^n$, $\iota_\mc X:\R^n\to\bar\R$ denotes the associated indicator function, i.e., $\iota_\mc X(u)=0$ if $u\in \mc X$, $\iota_\mc X(u)=+\infty$ otherwise. If $\mc X$ is nonempty and convex, the normal cone of $\mc X$ evaluated at $u$ is the set-valued mapping $\mathrm N_{\mc X} : \R^n\to2^{\R^n}$, defined as $\mathrm N_{\mc X}(u) \coloneqq \set{d \in \R^n}{d^\top(v - u)\leq0, \text{ for all } v \in \mc X}$ if $u\in\mc X$, $\mathrm N_{\mc X}(u)\coloneqq\emptyset$ otherwise. The set of fixed points of an operator $\mc T:\mc X\to\mc X$ is denoted by $\fix(\mc T)\eqdef\set{x\in\mc X}{\mc T(x)=x}$.
Given some function $g:\R^n\to\R$, the proximity and projection mappings are defined as $\prox_{\gamma g}(u)\eqdef\textrm{argmin}_{v\in\R^n}~\{g(v)+\tfrac{1}{2\gamma}\|u-v\|^2\}$, $\gamma>0$, and $\proj_{\mc X}(u)\eqdef\textrm{argmin}_{v\in\mc X}~\tfrac{1}{2}\|u-v\|^2$, respectively.
A mapping $F:\dom( F)\subseteq\R^n\to\R^n$ is $\ell$-Lipschitz continuous if, for some $\ell>0$, $\norm{F(x)-F(y)} \leq \ell\norm{x-y}$ for all $x$, $y \in \dom(F)$; $\eta$-strongly monotone if, for some $\eta>0$, $\inner{F(x)-F(y)}{x-y}\geq \eta\|x-y\|^{2}$ for all $x$, $y \in \dom(F)$; $\beta$-cocoercive if, for all $x$, $y \in \dom(F)$ and for some $\beta>0$, $ \inner{F(x)-F(y)}{x-y} \geq \beta\|F(x)-F(y)\|^{2}$; maximally monotone if there exists no monotone operator $G :C\to\R^n$ such that the graph of $G$ contains that of $F$.

%\glsresetall
%%%%%%%%%%%%%%%%%%%%%%%%%%%%%%%%%%%%%%%%%%%%
\section{Mathematical background}\label{sec:background}
\begin{algorithm}[t]
	\caption{Forward-backward iterative scheme}\label{alg:FB}
	
	\smallskip
	\textbf{Initialization:} Set $\gamma>0$, $x^0 \in\R^n$
	\smallskip
	
	\textbf{Iteration} $k\in\N_0$\textbf{:}
	\begin{equation}\label{eq:FB}
%		\left\{
		\begin{aligned}
			&y^k=x^k-\gamma\mc B(x^k)\\
			&x^{k+1}=\resolvent{J}{\gamma\mc A}(y^k)
		\end{aligned}
%		\right.
	\end{equation}
	\vspace{-.1cm}
\end{algorithm}
As one of the most used operator splitting techniques for designing iterative methods in \gls{NE} seeking \cite{yi2019operator, franci2020distributed} and machine learning \cite{hardt2016train,alacaoglu2022stochastic, gower2020variance}, the \gls{FB} scheme in Algorithm~\ref{alg:FB} is defined with $\omega\eqdef\col(y,x)\in\R^{2n}$, learning rate $\gamma>0$, and operators $\mc A:\Omega\to2^\Omega$, $\mc B:\Omega\to\Omega$, momentarily assumed monotone, $\Omega\subseteq\R^n$. By referring to the scheme in \eqref{eq:FB}, the goal then turns into finding $x^\star\in\zer(\mc A+\mc B)$.
We formalize next the data-driven problem of interest, as well as recall key notions and results on algorithmic stability theory.

\subsection{Problem statement}\label{subsec:problem_statement}
As introduced in \S \ref{sec:intro}, to compute some $x^\star\in\zer(\mc A+\mc B)$ we need to approximate the operator $\mc B$ through a finite number of data. To this end, we will therefore assume to rely on several queries to some \emph{noisy operator oracle}
%, $\mc O$ of noisy samples provided by an operator oracle $\mc O$.
%More formally, 
$\mc O:\Omega\times\Xi\to\Omega$. The latter formally amounts to a Borel function so that, given some $x\in\Omega$ and noise input $\xi\in\Xi\subseteq\R^d$, which is distributed according to an unknown probability $\prob{P}$, provides an unbiased estimate of the operator $\mc B$ as postulated next \cite{zhao2024learning}:
\begin{standing}\label{standing:unbiased}
	For all $x\in\Omega$, $\expected{\prob{P}}{\mc O(x,\xi)}=\mc B(x)$.
	\hfill$\square$
\end{standing}

In our analysis, we will hence leverage some approximation $\hat{\mc B}$ that depends on $s$-data taken from the set $\Xi$. The problem we address here is hence the following: given a finite dataset, how far can we get from a point in $\zer(\mc A+\mc B)$ by employing $\hat{\mc B}$ in \eqref{eq:FB} rather than the true operator $\mc B$? Specifically, we will make use of the following data-based approximation for $\mc B$:
\begin{equation}\label{eq:approxB}
	\hat{\mc B}_s(x)=\hat{\mc B}(x,\mc D_s)\eqdef\frac{1}{s}\sum_{i=1}^{s}\mc O(x,\xi^{(i)}),
\end{equation}
which turns the \gls{FB} iterative scheme \eqref{eq:FB} into the set of instructions reported in Algorithm~\ref{alg:FBmod}.
We will then refer to \eqref{eq:FBmod} as the data-driven variant of the \gls{FB} in \eqref{eq:FB}. In the remainder, we will thus implicitly assume that:
\begin{itemize}
	\item[(i)] A set $\mc D_s\eqdef\{\xi^{(i)}\}_{i=1}^s\in\Xi^s$, consisting of $s$ \gls{iid} samples drawn from an \emph{unknown} probability measure $\prob{P}$ attached to $\Xi^s$ is available;
	\item[(ii)] Akin to \cite{hardt2016train,farnia2021train,zhao2024learning}, the oracle $\mc O$ possesses the same properties postulated for the true operator $\mc B$;
	\item[(iii)] The learning rate $\gamma$ is given to meet the requirements discussed later when introducing suitable assumptions. 
\end{itemize}

\begin{standing}\label{standing:boundedness}
	There exists $M>0$ such that, for all $x\in\R^n$, $\norm{\mc B(x)}\le M$.
	\hfill$\square$
\end{standing}

\begin{algorithm}[h]
	\caption{Data-driven \gls{FB} iterative scheme}\label{alg:FBmod}
	
	\smallskip
	\textbf{Initialization:} Samples $\mc D_s$, set $\gamma>0$, $x^0 \in\R^n$
	\smallskip
	
	\textbf{Iteration} $k\in\N_0$\textbf{:}
	\begin{equation}\label{eq:FBmod}
		%		\left\{
		\begin{aligned}
			&y^k=x^k-\gamma\hat{\mc B}_s(x^k)=x^k-\frac{\gamma}{s}\sum_{i=1}^{s}\mc O(x^k,\xi^{(i)})\\
			&x^{k+1}=\resolvent{J}{\gamma\mc A}(y^k)
		\end{aligned}
		%		\right.
	\end{equation}
	\vspace{-.1cm}
\end{algorithm}

While relying on \gls{iid} samples may represent the main practical limitation for applying the probabilistic bounds we will develop, especially for mere control applications, we remark that, once the dataset $\mc D_s\in\Xi^s$ is given, Algorithm~\ref{alg:FBmod} happens to be \emph{deterministic}. Specifically, after running the underlying scheme for $K\ge1$ iterations, we obtain a deterministic output since all the calculations performed do not involve any source of randomness. In addition, Algorithm~\ref{alg:FBmod} turns out to be naturally \emph{symmetric} \gls{wrt} any $\mc D_s$, since the output obtained after $K$ steps does not depend on the order of the elements in $\mc D_s$.

Our goal is then to drive some empirical gap function (defined later) to zero by training on the operator $\hat{\mc B}$ for several iterations, and only seek to control the generalization gap that readily gives a measure on how far we can get from a point in $\zer(\mc A+\mc B)$. In particular, we will be interested in determining the radius $\varepsilon\ge0$ characterizing the set of $\varepsilon$-zeros, defined as: 
%\begin{equation}\label{eq:eps_zero}
%	\zer_\varepsilon(\mc A+\mc B)\eqdef\set{\omega\in\Omega}{\|\mc A(\omega)+\mc B(\omega)\|\le\varepsilon}.
%\end{equation}
\begin{equation}\label{eq:eps_zero}
	\zer_\varepsilon(\mc A+\mc B)\eqdef\set{\omega\in\Omega}{\exists z\in\mc A(\omega) \textrm{ s.t. } \|z+\mc B(\omega)\|\le\varepsilon}.
\end{equation}
Our task will be accomplished by taking an algorithmic stability perspective \cite{bousquet2002stability}, as introduced next.

\subsection{Preliminaries on algorithmic stability}\label{subsec:alg_stab}
An \emph{algorithm} is formally defined as an indexed family of mappings $\{A_s\}_{s\ge 0}$, with $A_s : \Xi^s \to \R^{2n}$ taking some dataset with $s$ samples and returning a deterministic hypothesis $H_s \eqdef A_s(\{\xi^{(i)}\}_{i=1}^s)=A_s(\mc D_s)$ \cite{vidyasagar2013learning}. We will consider later the following sets associated with $\mc D_s$:
%\blue{Barbara: notation check, is dimension $n$ the same as dimension $p$ for $\omega$ at the beginning?}
\begin{itemize}
	\item[--] The set obtained by \emph{removing} the $i$-th element from $\mc D_s$:
	\[
		\mc D_{s-1}^{\setminus i} = \{\xi^{(1)}, \ldots, \xi^{(i-1)}, \xi^{(i+1)}, \ldots, \xi^{(s)}\}.
	\]
	In short, we also denote $H_{s^{\setminus i}} \eqdef A_{s-1}(\mc D_{s-1}^{\setminus i})$;
	\item[--] The set obtained by \emph{replacing} the $i$-th element from $\mc D_s$:
	\[
		\mc D_s^i = \{\xi^{(1)}, \ldots, \xi^{(i-1)}, \xi', \xi^{(i+1)}, \ldots, \xi^{(s)}\},
	\]
	where $\xi'\in\Xi$ is drawn according to $\prob{P}$, \gls{iid} \gls{wrt} $\mc D_s \setminus \{\xi^{(i)}\}$. Here, we indicate $H_{s^{i}} \eqdef A_s(\mc D_s^i)$.
\end{itemize}

The performance of an algorithm is generally evaluated through some function $\ell : \R^{2n}  \times \R^d \to \R_{\ge 0}$ that measures the
\emph{loss} associated to an hypothesis $H$ \gls{wrt} an example $\xi$.
Attached to the loss function, one identifies the so-called \emph{risk} or \emph{generalization error}, which coincides with a random variable (the hypothesis generated indeed depends on $\mc D_s$), defined as:
\begin{equation}\label{eq:risk}
	r(A,s)=\expected{\prob{P}}{\ell(H_s, \xi)},
\end{equation}
where we use $A$ instead of $A_s$ as first argument. Note that computing \eqref{eq:risk} would require $\prob{P}$, which is however unavailable in the considered framework. Nevertheless, the simplest estimator for \eqref{eq:risk} amounts to the \emph{empirical error}, which reads as:
\begin{equation}\label{eq:empirical_risk}
	\hat r(A,s)=\frac{1}{s} \sum_{i=1}^{s} \ell(H_s, \xi^{(i)}).
\end{equation}

\begin{definition}[\textup{\hspace{-.02cm}\cite[Def.~6]{bousquet2002stability} Uniform stability}]\label{def:unif_stab}
	An algorithm $\{A_s\}_{s\ge0}$ has uniform stability $\beta=\beta(s)$ \gls{wrt} the loss function $\ell$ if for all $s\ge0$, dataset with $s$ samples $\mc D_s$, and $i\in\{1,\ldots,s\}$, $|\ell(H_s,\xi)-\ell(H_{s^{\setminus i}},\xi)| \le \beta$ for all $\xi\in\Xi$.
	\hfill$\square$
\end{definition}

A mapping is then uniformly stable if changing one sample $\xi^{(i)}$ in its input dataset does not significantly alter the output \gls{wrt} to the metric identified by the loss $\ell$ itself. 
%In general, the smaller the $\beta$ the better the stability features possessed by the algorithm at hand. 
Definition~\ref{def:unif_stab} is also frequently stated as $|\ell(H_s,\xi)-\ell(H_{s^{i}},\xi)| \le \beta$ for all $\xi\in\Xi$ \cite{kutin2002almost,bousquet2020sharper}, i.e., considering sample replacement rather than removal. This is indeed how it will be employed later.

The following result will be key to establish our sample complexity bounds on the generalization error, which we will see that, in some cases, it provides a direct upper bound on the distance from some $x^\star\in\zer(\mc A+\mc B)$. An appropriate choice for $\ell$ will hence be inevitably crucial for our purposes:
\begin{lemma}[\textup{\hspace{-.02cm}\cite[Th.~12]{bousquet2002stability} Exponential bound with uniform stability}]\label{lemma:exp_bound}
	Let $\{A_s\}_{s\ge0}$ be an algorithm with uniform stability $\beta=\beta(s)$ \gls{wrt} a loss function $0\le\ell(H_s,\xi)\le\bar \ell$, $\bar \ell\ge0$, for all $\xi\in\Xi$ and $\mc D_s$. Then, for any $s\ge1$ and $\delta\in(0,1)$, the following bound hold true:
	\ifTwoColumn
%		\[
		\begin{multline}
		\prob{P}^{s}\Big\{\mc D_s\in\Xi^s:r(A,s)\le\hat r(A,s)+2\beta\\
			+(4s\beta+\bar \ell)\sqrt{\ln(1/\delta)/2s}\Big\}\ge1-\delta.\notag
		\end{multline}
%		\]
	\else
		\[
		\prob{P}^{s}\left\{\mc D_s\in\Xi^s:r(A,s)\le\hat r(A,s)+2\beta+(4s\beta+\bar \ell)\sqrt{\frac{\ln(1/\delta)}{2s}}\right\}\ge1-\delta.
		\]
	\fi
	\hfill$\square$
\end{lemma}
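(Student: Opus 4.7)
The approach is to apply McDiarmid's bounded-differences inequality to $\Phi(\mc D_s) \eqdef r(A,s) - \hat r(A,s)$, regarded as a function of the $s$ i.i.d.\ coordinates of $\mc D_s$. This reduces the task to two subproblems: bounding $\mathbb{E}[\Phi(\mc D_s)]$ in expectation, and bounding the amount by which $\Phi$ can change when a single coordinate of $\mc D_s$ is swapped.

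For the expectation, a standard exchangeability argument shows that, for each $i$, $\mathbb{E}_{\mc D_s}[\ell(H_s,\xi^{(i)})] = \mathbb{E}_{\mc D_s^i,\xi'}[\ell(H_{s^i},\xi')]$, since the joint law of the samples is invariant under the swap of $\xi^{(i)}$ and a fresh copy $\xi'$. Averaging over $i$ and subtracting from $\mathbb{E}[\ell(H_s,\xi')]$, uniform stability---used in its replacement form, which follows from Definition~\ref{def:unif_stab} at the cost of a factor of two via the triangle inequality---yields $|\mathbb{E}[\Phi(\mc D_s)]| \le 2\beta$.

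For the bounded-differences constants, fix $i$ and consider $\Phi(\mc D_s) - \Phi(\mc D_s^i)$. The risk contribution satisfies $|r(A,s) - r(A,s^i)| \le \mathbb{E}_\xi|\ell(H_s,\xi) - \ell(H_{s^i},\xi)| \le 2\beta$ again by replacement stability. For the empirical risk, the $s-1$ summands indexed by $j\neq i$ each differ by at most $2\beta$ for the same reason, while the remaining summand at index $i$ is bounded crudely by $\bar\ell$ using only $0\le \ell \le \bar\ell$. Dividing by $s$ gives $|\hat r(A,s)-\hat r(A,s^i)| \le 2\beta + \bar\ell/s$, so that $c_i \eqdef 4\beta + \bar\ell/s$ serves as a valid bounded-differences constant at every coordinate.

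McDiarmid's inequality then gives $\prob{P}^s\{\Phi(\mc D_s) - \mathbb{E}[\Phi(\mc D_s)] \ge t\} \le \exp(-2t^2/(s\,c_i^2))$, and equating the right-hand side to $\delta$ yields $t = (4s\beta + \bar\ell)\sqrt{\ln(1/\delta)/(2s)}$ after factoring $(4\beta+\bar\ell/s)\sqrt{s}=(4s\beta+\bar\ell)/\sqrt{s}$. Combining this concentration bound with the $2\beta$ bound on $\mathbb{E}[\Phi]$ delivers the stated high-probability inequality for $r(A,s) - \hat r(A,s)$. I expect the only real subtlety to be the exchangeability rename in the expectation step together with careful accounting of the factor of two when passing from the removal to the replacement formulation of stability, and of the special role played by the $i$-th summand in the empirical risk; the remainder is a textbook application of McDiarmid.
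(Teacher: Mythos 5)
The paper does not reprove this lemma---it is quoted verbatim from \cite[Th.~12]{bousquet2002stability} with only the remark that it rests on McDiarmid's inequality---and your sketch reproduces exactly that standard argument: the symmetrization/renaming step giving $|\mathbb{E}[\Phi]|\le 2\beta$, the bounded-difference constant $4\beta+\bar\ell/s$ (with the $i$-th empirical summand handled separately via $0\le\ell\le\bar\ell$), and McDiarmid with the correct algebra yielding $(4s\beta+\bar\ell)\sqrt{\ln(1/\delta)/(2s)}$. The proposal is correct and takes essentially the same route as the cited proof.
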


Based on the McDiarmid's inequality \cite{mcdiarmid1989method}, Lemma~\ref{lemma:exp_bound} says that, with arbitrary confidence $1-\delta$, the risk associated to hypothesis $H_s=A_s(\mc D_s)$ is upper bounded by quantities that: i) if $\beta$ scales inversely with $s$, vanishes as $s\to\infty$ (consistency property), and ii) it is exponential in the confidence parameter $\delta$. Referring to i), we will prove later that it is indeed the case for the data-driven version of the \gls{FB} scheme in Algorithm~\ref{alg:FBmod}, once identified $\omega=\col(y,x)$ as the hypothesis of our algorithm. Since we will consider sample replacement rather than removal, after revisiting the proof of Lemma~\ref{lemma:exp_bound} we will employ the following slightly different bound \cite[Th.~3.2]{kutin2002almost}:
\ifTwoColumn
	\begin{multline}\label{eq:exp_bound_replacement}
	\prob{P}^{s}\Big\{\mc D_s\in\Xi^s:r(A,s)\le\hat r(A,s)+\beta\\
	+(s\beta+\bar \ell)\sqrt{2\ln(1/\delta)/s}\Big\}\ge1-\delta.
	\end{multline}
\else
	\begin{equation}\label{eq:exp_bound_replacement}
		\prob{P}^{s}\left\{\mc D_s\in\Xi^s:r(A,s)\le\hat r(A,s)+\beta+(s\beta+\bar \ell)\sqrt{\frac{2\ln(1/\delta)}{s}}\right\}\ge1-\delta.
	\end{equation}
\fi
We finally note that, since the RHS in both bounds depends on $\hat r(A,s)$, it is unpractical to design loss functions that depend on possibly unknown quantities such as, e.g., some $x^\star\in\zer(\mc A+\mc B)$ itself. This is simply because the term $\sum_{i=1}^{s} \ell(H_s, \xi^{(i)})$ in the RHS might not be computed directly.

%%%%%%%%%%%%%%%%%%%%%%%%%%%%%%%%%%%%%%%%%%%%
\section{Generalization bounds for the\\ data-driven \gls{FB} algorithm}\label{sec:FB}
We now establish stability properties for Algorithm~\ref{alg:FBmod} \gls{wrt} a predefined loss function under two different sets of assumptions commonly used to show convergence of Algorithm~\ref{alg:FB}. 
%Consistently with \cite{hardt2016train} it is anticipated that, with stronger assumptions, the resulting stability bound depends inversely on the number of samples only, while with weaker assumptions also the number of iterations plays a key role. 

Then, by letting $H=\omega$ as the generic hypothesis of our data-based \gls{FB} in \eqref{eq:FBloss}, we consider the following loss function as a measure of the associated performance, for a given $\gamma>0$:
\begin{equation}\label{eq:FBloss}
	\begin{aligned}
		\ell(H,\xi)&=\norm{[0_n\quad I_n] H-\gamma \mc O([0_n\quad I_n] H,\xi)-[I_n\quad0_n]H}\\
		&=\norm{x-\gamma \mc O(x,\xi)-y}.
	\end{aligned}
\end{equation}
%\vspace{-.5cm}
According to the requirements of Lemma~\ref{lemma:exp_bound}, we need to impose the following mild condition on the underlying loss:
\begin{standing}
	For all $\xi\in\Xi$ and $\mc D_s\in\Xi^s$, $0\le\ell(H_s,\xi)\le\bar \ell$, for some $\bar \ell\ge0$.
	\hfill$\square$
\end{standing}

Although not the only possible choice, it will be clear from Theorems~\ref{th:FB_sample_complexity} and \ref{th:FB_sample_complexity_general} and related proofs why such a loss function is particularly convenient for our purposes. As we will note indeed, besides capturing the fixed-point residual, such a specific choice will make our bounds particularly easy to compute in practical cases---see the discussion in \S \ref{subsec:discussion}.

\subsection{Stability bounds independent on the number of iterations}\label{sec_stab_strong}
%First, let us introduce some assumptions.
Before proceeding in the analysis, let us introduce some properties of the involved operators.
\begin{assumption}\label{ass:SMON}
	The operator $\mc A:\Omega\to2^\Omega$ is maximally monotone, while $\mc B:\Omega\to\Omega$ is $\mu$-strongly monotone and $\kappa$-Lipschitz continuous, with $\mu\le\kappa$, and $\gamma\in(0,2\mu/\kappa^2)$.
	\hfill$\square$
\end{assumption}
Under Assumption~\ref{ass:SMON}, the sequence $\{x^k\}_{k\in\N}$ produced by Algorithm~\ref{alg:FB} converges linearly to the unique $x^\star\in\zer(\mc A+\mc B)$ \cite[Prop.~25.9]{bauschke2017correction}. According to the discussion in \S \ref{subsec:problem_statement}, throughout this subsection we will assume that, for all $\xi\in\Xi$, $x\mapsto\mc O(x,\xi)$ is $\mu$-strongly monotone and $\kappa$-Lipschitz continuous.
%\blue{Barbara: do we want to make these formal assumptions? I'd say yes so that we can clarify what's the difference with the other section.}
%\blue{Barbara: i think we talked about this already but: $\mc O(x,\xi)$ is a single sample call of the oracle? I mean, what is $\xi$ here?} \tr{FF: $\mc O(x,\xi)$ is a generic call to the oracle, $\xi$ is the random variable (not a realization) -- recall the definitions of $r$ and $\hat r$ in \eqref{eq:risk} and \eqref{eq:empirical_risk}, respectively.}
We can then preliminary establish what follows:

%\blue{Barbara: we don't explain why this is the one we choose}
\begin{lemma}\label{lemma:FB_unif_stab}
	Let $\tau\eqdef\sqrt{1-\gamma(2\mu-\gamma\kappa^2)}<1$. Under Assumption~\ref{ass:SMON}, Algorithm~\ref{alg:FBmod} possesses $(2\gamma M(1+\tau)/(s(1-\tau)))$-uniform stability \gls{wrt} the loss function $\ell(H,\xi)$ in \eqref{eq:FBloss}.
	\hfill$\square$
\end{lemma}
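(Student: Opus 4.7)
My plan is to control how two FB trajectories generated from neighboring datasets $\mc D_s$ and $\mc D_s^i$ diverge, and then translate this iterate discrepancy into a pointwise perturbation of the loss. Let $(y^k,x^k)$ and $(\tilde y^k,\tilde x^k)$ denote the iterates of \eqref{eq:FBmod} starting from the same $x^0$ and driven by $\hat{\mc B}_s$ and $\hat{\mc B}'_s\eqdef\hat{\mc B}(\cdot,\mc D_s^i)$ respectively, and write $\omega^K$, $\tilde\omega^K$ for the corresponding hypotheses. The linchpin is the classical estimate that, under Assumption~\ref{ass:SMON} with $\gamma\in(0,2\mu/\kappa^2)$, for any $\mu$-strongly monotone and $\kappa$-Lipschitz operator $G$ the map $\Id-\gamma G$ is $\tau$-Lipschitz. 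By item~(ii) of \S\ref{subsec:problem_statement}, each $\mc O(\cdot,\xi)$ and the sample average $\hat{\mc B}_s$ (a convex combination of the former) inherit these two properties, so the contraction estimate applies to $\Id-\gamma\mc O(\cdot,\xi)$ and to $\Id-\gamma\hat{\mc B}_s$ alike.

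Writing $d_k\eqdef\|x^k-\tilde x^k\|$ and $e_k\eqdef\|y^k-\tilde y^k\|$, I first build a recursion on the iterates. Adding and subtracting $\gamma\hat{\mc B}_s(\tilde x^k)$ in the $y$-update and invoking the $\tau$-contraction of $\Id-\gamma\hat{\mc B}_s$ yields $e_k\le\tau d_k+\gamma\|\hat{\mc B}_s(\tilde x^k)-\hat{\mc B}'_s(\tilde x^k)\|$. Since $\hat{\mc B}_s$ and $\hat{\mc B}'_s$ differ in exactly one summand, Standing Assumption~\ref{standing:boundedness} applied to $\mc O$ yields $\|\hat{\mc B}_s(\tilde x^k)-\hat{\mc B}'_s(\tilde x^k)\|\le 2M/s$, so $e_k\le\tau d_k+2\gamma M/s$. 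Nonexpansiveness of $\resolvent{J}{\gamma\mc A}$ (maximal monotonicity of $\mc A$) supplies $d_{k+1}\le e_k$, and iterating from $d_0=0$ produces $d_K\le (2\gamma M/(s(1-\tau)))(1-\tau^K)$ and thus $e_K\le 2\gamma M/(s(1-\tau))$.

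To convert iterate stability into loss stability, I use the reverse triangle inequality and the $\tau$-contraction of $\Id-\gamma\mc O(\cdot,\xi)$:
\[
|\ell(\omega^K,\xi)-\ell(\tilde\omega^K,\xi)|\le\|(x^K-\tilde x^K)-\gamma(\mc O(x^K,\xi)-\mc O(\tilde x^K,\xi))\|+e_K\le\tau d_K+e_K.
\]
Plugging in $e_K\le\tau d_K+2\gamma M/s$ together with the bound on $d_K$ gives
\[
\tau d_K+e_K\le 2\tau d_K+\tfrac{2\gamma M}{s}\le\tfrac{2\gamma M}{s}\Big(\tfrac{2\tau}{1-\tau}+1\Big)=\tfrac{2\gamma M(1+\tau)}{s(1-\tau)},
\]
which is precisely the claimed stability constant.

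The real content lies in the contraction estimates: proving that $\Id-\gamma\hat{\mc B}_s$ and each $\Id-\gamma\mc O(\cdot,\xi)$ are contractions with the same tight factor $\tau$, and exploiting the one-summand perturbation through Standing Assumption~\ref{standing:boundedness}. Once these ingredients are in place, the bookkeeping telescopes neatly to the sharp factor $(1+\tau)/(1-\tau)$, which is what makes the stability bound independent of the iteration count $K$ and scaling as $1/s$.
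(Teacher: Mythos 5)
Your proposal is correct and follows essentially the same route as the paper's proof: reverse triangle inequality on the loss, the $\tau$-contraction of $\Id-\gamma\mc O(\cdot,\xi)$ from strong monotonicity plus Lipschitz continuity, firm nonexpansiveness of $\resolvent{J}{\gamma\mc A}$, and the recursion $e_k\le\tau d_k+2\gamma M/s$ summed geometrically. The only cosmetic differences are that you obtain the one-sample perturbation term by directly adding and subtracting $\gamma\hat{\mc B}_s(\tilde x^k)$ (where the paper invokes the growth lemma of Farnia--Ozdaglar), and you assemble the final constant as $\tau d_K+e_K$ rather than $(1+\tau)e_K$, both of which land on the same bound $2\gamma M(1+\tau)/(s(1-\tau))$.
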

\begin{proof}
	Fix some $s\geq1$, and consider two datasets, $\mc D_s$, $\mc D_s^i\in\Xi^s$, both consisting of $s$-\gls{iid} random samples and differing on the $i$-th instance only, i.e., some $\xi'$ replaces $\xi^{(i)}$ in $\mc D_s^i$. After iterating Algorithm~\ref{alg:FBmod} for $K\ge1$ times by leveraging samples in $\mc D_s$ and $\mc D_s^i$, and starting from the same $x^0\in\R^n$, we obtain $H_s=\omega^K$ and $H_{s^i}=\tilde{\omega}^K$ as the two corresponding hypotheses. By picking an arbitrary $\xi\in\Xi$ we readily have: 
	\ifTwoColumn
		\begin{equation}\label{eq_lemma2}
			\begin{aligned}
				|\ell(H_s,\xi)-\ell(H_{s^{i}},\xi)|&\stackrel{(a)}{\le}\left\|
					x^{K+1}-\gamma \mc O(x^{K+1},\xi)-\tilde x^{K+1}\right.\\
				&\hspace{1.6cm}\left.+\gamma \mc O(\tilde x^{K+1},\xi)-y^K+\tilde y^K\right\|\\
				&\stackrel{(b)}{\le}\tau\norm{x^{K+1}-\tilde x^{K+1}}+\norm{y^K-\tilde y^K}\\
				&=\tau\norm{\resolvent{J}{\gamma\mc A}(y^K)\!-\!\resolvent{J}{\gamma\mc A}(\tilde y^K)}\!+\!\norm{y^K-\tilde y^K}\\
				&\stackrel{(c)}{\le}(1+\tau)\norm{y^K-\tilde y^K},
			\end{aligned}
		\end{equation}	
	\else
		\begin{equation}\label{eq_lemma2}
			\begin{aligned}
				|\ell(H_s,\xi)-\ell(H_{s^{i}},\xi)|&\stackrel{(a)}{\le}\norm{
					x^{K+1}-\gamma \mc O(x^{K+1},\xi)-\tilde x^{K+1}+\gamma \mc O(\tilde x^{K+1},\xi)
					-y^K+\tilde y^K}\\
				&\stackrel{(b)}{\le}\tau\norm{x^{K+1}-\tilde x^{K+1}}+\norm{y^K-\tilde y^K}\\
				&=\tau\norm{\resolvent{J}{\gamma\mc A}(y^K)-\resolvent{J}{\gamma\mc A}(\tilde y^K)}+\norm{y^K-\tilde y^K}\\
				&\stackrel{(c)}{\le}(1+\tau)\norm{y^K-\tilde y^K},
			\end{aligned}
		\end{equation}
	\fi
	\sloppy where (a) follows in view of the reverse triangle inequality, (b) since standard calculations exploiting the $\mu$-strong monotonicity and $\kappa$-Lipschitz continuity of the operator $\mc O(\cdot,\xi)$ reveals that: 
	\[
	\begin{aligned}
		&\norm{(x^{K+1}-\gamma \mc O(x^{K+1},\xi))-(\tilde x^{K+1}-\gamma \mc O(\tilde x^{K+1},\xi))}^2\\
		&=\!\norm{x^{K\!+\!1}\!-\!\tilde x^{K\!+\!1}}^2\!-\!2\gamma\inner{x^{K\!+\!1}\!-\!\tilde x^{K\!+\!1}}{\mc O(x^{K\!+\!1},\xi)\!-\! \mc O(\tilde x^{K\!+\!1},\xi)}\\
		&\hspace{4cm}+\gamma^2\norm{\mc O(x^{K+1},\xi)-\mc O(\tilde x^{K+1},\xi)}^2\\
		&\le\norm{x^{K\!+\!1}\!-\!\tilde x^{K\!+\!1}}^2\!-\!2\mu\gamma\norm{x^{K\!+\!1}\!-\!\tilde x^{K\!+\!1}}^2\!+\!\gamma^2\kappa^2\norm{x^{K\!+\!1}\!-\!\tilde x^{K\!+\!1}}^2\\
		&=(1-2\mu\gamma+\gamma^2\kappa^2)\norm{x^{K+1}-\tilde x^{K+1}}^2\reqdef\tau^2\norm{x^{K+1}-\tilde x^{K+1}}^2,
	\end{aligned}
	\]
	with $\tau\in[0,1)$ since $\gamma\in(0,2\mu/\kappa^2)$ and $\mu\le\kappa$ in view of Assumption~\ref{ass:SMON}, and (c) since $\resolvent{J}{\gamma\mc A}(\cdot)$ is the resolvent of a maximally monotone operator $\mc A$, and hence firmly nonexpansive \cite[Cor.~23.10]{bauschke2017correction}, i.e., for all $\gamma>0$ it satisfies $\norm{\resolvent{J}{\gamma\mc A}(y^K)-\resolvent{J}{\gamma\mc A}(\tilde y^K)}^2\le\norm{y^K-\tilde y^K}^2-\norm{(y^K-\resolvent{J}{\gamma\mc A}(y^K))-(\tilde y^K-\resolvent{J}{\gamma\mc A}(\tilde y^K))}^2\le\norm{y^K-\tilde y^K}^2$.
	According to Definition~\ref{def:unif_stab}, it then remains to bound $\|y^K-\tilde y^K\|$, possibly as a function of $1/s$.
	To this end, we will exploit the generalized growth lemma established in \cite[Lemma~2]{farnia2021train}. Note that $\|y^K-\tilde y^K\|=\|x^K-\frac{\gamma}{s} \sum_{j=1, j\neq i}^s  \mc O(x^K,\xi^{(j)})-\frac{\gamma}{s} \mc O(x^K,\xi^{(i)})-(\tilde x^K-\frac{\gamma}{s} \sum_{j=1, j\neq i}^s \mc O(\tilde x^K,\xi^{(j)})-\frac{\gamma}{s} \mc O(\tilde x^K,\xi'))\|$. Then, for each of the two trajectories yielding $y^K$ and $\tilde y^K$, let us define the following update processes:
	\[
	\begin{aligned}
		&P(x)\eqdef x-\frac{\gamma}{s}\sum_{\stackrel{j=1}{j\neq i}}^s \mc O(x,\xi^{(j)}), &&& \hat P_k(x)\eqdef-\frac{\gamma}{s} \mc O(x,\xi^{(i)})\\
		&P'(x)\eqdef P(x), &&&\hat P'_k(x)\eqdef-\frac{\gamma}{s} \mc O(x,\xi').
	\end{aligned}
	\]
	\sloppy In this way, at the generic iteration $k$ we have that $\|y^k-\tilde y^k\|=\|x^k-\frac{\gamma}{s} \sum_{j=1,j\neq i}^s  \mc O(x^k,\xi^{(j)})-\frac{\gamma}{s} \mc O(x^k,\xi^{(i)})-\tilde x^k+\frac{\gamma}{s} \sum_{j=1,j\neq i}^s  \mc O(\tilde x^k,\xi^{(j)})+\frac{\gamma}{s} \mc O(\tilde x^k,\xi')\|=\|P(x^k)+\hat P_k(x^k)-P'(\tilde x^k)-\hat P'_k(\tilde x^k)\|$. Thus, by relying on \cite[Lemma~2]{farnia2021train}, we obtain that $\|y^k-\tilde y^k\|\le\theta\|x^k-\tilde x^k\|+\|\hat P_k(x^k)\|+\|\hat P'_k(\tilde x^k)\|$, where $\theta$ is the parameter of contraction of the update rule $P(x)+\hat P_k(x)$, which in view of the calculations above is upper bounded by $\tau$, smaller than $1$ if $\gamma\in(0,2\mu/\kappa^2)$ and $\mu\le\kappa$. In view of the boundedness of the operator $\mc O(\cdot,\xi)$, obtained from Standing Assumption~\ref{standing:boundedness}, both $\|\hat P_k(x^k)\|$ and $\|\hat P'_k(\tilde x^k)\|$ are then upper bounded by $\gamma M/s$, thereby yielding:
	\ifTwoColumn
		\[
		\begin{aligned}
			\|y^k-\tilde y^k\|&\le\tau\|x^k-\tilde x^k\|+\frac{2\gamma M}{s}\\
			&=\tau\|\resolvent{J}{\gamma\mc A}(y^{k-1})-\resolvent{J}{\gamma\mc A}(\tilde y^{k-1})\|+\frac{2\gamma M}{s}\\
			&\le\tau\norm{y^{k-1}-\tilde y^{k-1}}+\frac{2\gamma M}{s}.
		\end{aligned}
		\]
	\else
		\[
		\begin{aligned}
			\|y^k-\tilde y^k\|&\le\tau\|x^k-\tilde x^k\|+\frac{2\gamma M}{s}=\tau\|\resolvent{J}{\gamma\mc A}(y^{k-1})-\resolvent{J}{\gamma\mc A}(\tilde y^{k-1})\|+\frac{2\gamma M}{s}\\
			&\le\tau\norm{y^{k-1}-\tilde y^{k-1}}+\frac{2\gamma M}{s}.
		\end{aligned}
		\]
	\fi
	Recursing over the $K$-steps by considering that i) the resolvent operator $\resolvent{J}{\gamma\mc A}(\cdot)$ is firmly nonexpansive, and hence $\|\resolvent{J}{\gamma\mc A}(y^{k-1})-\resolvent{J}{\gamma\mc A}(\tilde y^{k-1})\|\le\norm{y^{k-1}-\tilde y^{k-1}}$ at every $k$, ii) both trajectories originate from the same initial condition $x^0\in\R^n$, and iii) exploiting the relation $\sum_{j=1}^{K} a^{K-j}=(1-a^K)/(1-a)$, which holds true for any $a\in\R$, we obtain:
	\[
	\|y^K-\tilde y^K\|\le\frac{2\gamma M}{s}\sum_{j=1}^K \tau^{K-j}=\frac{2\gamma M(1-\tau^K)}{s(1-\tau)}\le\frac{2\gamma M}{s(1-\tau)},
	\]
	Putting everything together with \eqref{eq_lemma2}, we finally arrive at the following relation:
	\[
	|\ell(H_s,\xi)-\ell(H_{s^{i}},\xi)|\le\frac{2\gamma M(1+\tau)}{s(1-\tau)},
	\]
	which proves uniform stability with bound inverse to $s$.
%	, concluding the proof.
\end{proof}

We can then prove one of the main results of this section:
\begin{theorem}\label{th:FB_sample_complexity}
	Fix $s\ge1$ and $\delta\in(0,1)$. Given any dataset $\mc D_s\in\Xi^s$, under Assumption~\ref{ass:SMON} there exists $\varepsilon=\varepsilon(s,\delta)>0$ such that $x^{K+1}\in\zer_\varepsilon(\mc A+\mc B)$ with probability at least $1-\delta$, where $x^{K+1}$ is obtained by iterating Algorithm~\ref{alg:FBmod} $K$-times.
	\hfill$\square$
\end{theorem}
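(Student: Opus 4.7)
My plan is to convert the uniform stability estimate of Lemma~\ref{lemma:FB_unif_stab} and the concentration inequality~\eqref{eq:exp_bound_replacement} into a high-probability bound on the residual defining $\zer_\varepsilon(\mc A+\mc B)$, by exploiting the fact that the loss~\eqref{eq:FBloss} encodes exactly that residual in expectation.

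Consistently with the proof of Lemma~\ref{lemma:FB_unif_stab}, I would take the hypothesis returned by Algorithm~\ref{alg:FBmod} to be $H_s=(y^K,x^{K+1})$, a deterministic function of $\mc D_s$. The key algebraic observation is that the backward step $x^{K+1}=\resolvent{J}{\gamma\mc A}(y^K)$ is equivalent to $z\eqdef(y^K-x^{K+1})/\gamma\in\mc A(x^{K+1})$, so that~\eqref{eq:FBloss} rewrites as $\ell(H_s,\xi)=\gamma\norm{z+\mc O(x^{K+1},\xi)}$. Since $H_s$ is measurable \gls{wrt} $\mc D_s$ while the fresh $\xi\sim\prob P$ used to define $r(A,s)$ is independent of $\mc D_s$, Jensen's inequality combined with Standing Assumption~\ref{standing:unbiased} gives
\[
r(A,s)=\expected{\prob P}{\ell(H_s,\xi)}\ge\gamma\norm{z+\mc B(x^{K+1})}.
\]
Hence, any high-probability upper bound on $r(A,s)/\gamma$ immediately certifies that $x^{K+1}\in\zer_\varepsilon(\mc A+\mc B)$ via the choice of $z$ above.

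I would then invoke Lemma~\ref{lemma:FB_unif_stab} with stability constant $\beta=2\gamma M(1+\tau)/(s(1-\tau))$ and plug it into~\eqref{eq:exp_bound_replacement}, obtaining, with probability at least $1-\delta$ over $\mc D_s\sim\prob P^s$,
\[
r(A,s)\le\hat r(A,s)+\beta+(s\beta+\bar\ell)\sqrt{2\ln(1/\delta)/s}.
\]
Dividing by $\gamma$ yields the explicit, data-computable radius
\[
\varepsilon(s,\delta)\eqdef\tfrac{1}{\gamma}\Bigl(\hat r(A,s)+\beta+(s\beta+\bar\ell)\sqrt{2\ln(1/\delta)/s}\Bigr),
\]
which closes the proof. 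Note that the product $s\beta$ stays bounded in $s$, so the confidence term decays as $\sqrt{\ln(1/\delta)/s}$, while $\beta$ itself decays as $1/s$; consistently with the paper's intuition, $\varepsilon$ is independent of the iteration count $K$.

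The main obstacle, in my view, is the careful identification of $z\in\mc A(x^{K+1})$ through the resolvent step, and the recognition that the loss~\eqref{eq:FBloss} is engineered precisely so that, after pulling the norm inside the expectation via Jensen and invoking the unbiasedness of $\mc O$, the residual $\gamma\norm{z+\mc B(x^{K+1})}$ appears as a lower bound on $r(A,s)$. Once this bridge between algorithmic stability and the $\varepsilon$-zero residual is in place, the probabilistic statement follows mechanically from Lemma~\ref{lemma:FB_unif_stab} and~\eqref{eq:exp_bound_replacement}. A subtle point worth stressing is that $\hat r(A,s)$ survives in the final radius: unlike natural alternatives such as $\norm{x^{K+1}-x^\star}$, the empirical risk associated to~\eqref{eq:FBloss} is actually computable from $\mc D_s$ and the iterates $(y^K,x^{K+1})$, which is what makes the certificate practical.
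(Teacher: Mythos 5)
Your proposal is correct and follows essentially the same route as the paper: bound the risk of the loss~\eqref{eq:FBloss} via Lemma~\ref{lemma:FB_unif_stab} and~\eqref{eq:exp_bound_replacement}, use unbiasedness plus Jensen to relate that risk to the residual in~\eqref{eq:eps_zero}, and set $\varepsilon=\epsilon/\gamma$. The only (minor, arguably cleaner) difference is that you extract $z=(y^K-x^{K+1})/\gamma\in\mc A(x^{K+1})$ directly from the resolvent step and lower-bound $r(A,s)$ by $\gamma\norm{z+\mc B(x^{K+1})}$ in one Jensen step, whereas the paper first passes through the fixed-point residual $\norm{\resolvent{J}{\gamma\mc A}(x^{K+1}-\gamma\mc B(x^{K+1}))-x^{K+1}}$ (using firm nonexpansiveness of the resolvent) and only converts to the $\varepsilon$-zero form at the end.
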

\begin{proof}
	From \cite[Prop.~25.1(iv)]{bauschke2017correction}, we have that $\zer(\mc A+\mc B)=\fix(\resolvent{J}{\gamma\mc A}\circ(\Id-\gamma\mc B))$. Therefore, a possible metric to evaluate the distance between a generic $x^{k+1}$ and $x^\star$ is $\|\resolvent{J}{\gamma\mc A}(x^{k+1}-\gamma\mc B(x^{k+1}))-x^{k+1}\|$, which in turn measures the gap between consecutive iterations on variable $x$. After running Algorithm~\ref{alg:FBmod} $K$-times, consider the following relations:
	\ifTwoColumn
		\[
		\begin{aligned}
			&\norm{\resolvent{J}{\gamma\mc A}(x^\star-\gamma\mc B(x^\star))-x^\star}=0\\
			&\hspace{1.8cm}\le\norm{\resolvent{J}{\gamma\mc A}(x^{K+1}-\gamma\mc B(x^{K+1}))-x^{K+1}}\\
			&\hspace{1.8cm}\stackrel{(a)}{=}\norm{\resolvent{J}{\gamma\mc A}(x^{K+1}-\gamma\expected{\prob{P}}{\mc O(x^{K+1},\xi)})-\resolvent{J}{\gamma\mc A}(y^K)}\\
			&\hspace{1.8cm}\stackrel{(b)}{\le}\norm{x^{K+1}-\gamma\expected{\prob{P}}{\mc O(x^{K+1},\xi)}-y^K}\\
			&\hspace{1.8cm}\stackrel{(c)}{=}\norm{\expected{\prob{P}}{x^{K+1}-\gamma \mc O(x^{K+1},\xi)-y^K}}\\
			&\hspace{1.8cm}\stackrel{(d)}{\le}\expected{\prob{P}}{\norm{x^{K+1}-\gamma \mc O(x^{K+1},\xi)-y^K}}\\
			&\hspace{1.8cm}=\expected{\prob{P}}{\ell(H_s,\xi)}=r(A,s),
		\end{aligned}
		\]
	\else
		\[
		\begin{aligned}
			\norm{\resolvent{J}{\gamma\mc A}(x^\star-\gamma\mc B(x^\star))-x^\star}&=0\\
			&\le\norm{\resolvent{J}{\gamma\mc A}(x^{K+1}-\gamma\mc B(x^{K+1}))-x^{K+1}}\\
			&\stackrel{(a)}{=}\norm{\resolvent{J}{\gamma\mc A}(x^{K+1}-\gamma\expected{\prob{P}}{\mc O(x^{K+1},\xi)})-\resolvent{J}{\gamma\mc A}(y^K)}\\
			&\stackrel{(b)}{\le}\norm{x^{K+1}-\gamma\expected{\prob{P}}{\mc O(x^{K+1},\xi)}-y^K}\\
			&\stackrel{(c)}{=}\norm{\expected{\prob{P}}{x^{K+1}-\gamma \mc O(x^{K+1},\xi)-y^K}}\\
			&\stackrel{(d)}{\le}\expected{\prob{P}}{\norm{x^{K+1}-\gamma \mc O(x^{K+1},\xi)-y^K}}\\
			&=\expected{\prob{P}}{\ell(H_s,\xi)}=r(A,s),
		\end{aligned}
		\]
	\fi
	where we have used in (a) the fact that the oracle operator is unbiased in view of Standing Assumption~\ref{standing:unbiased}, together with the second relation in \eqref{eq:FBmod}, in (b) the firm nonexpansiveness of the resolvent of $\mc A$, for any $\gamma>0$, in (c) the fact that the data-driven \gls{FB} in \eqref{eq:FBmod} yields a deterministic output $\omega^K$ once fixed the dataset $\mc D_s$, as well as the linearity of the expected value, and in (d) the Jensen's inequality \cite{perlman1974jensen}.
	Thus, upper bounding the risk $\expected{\prob{P}}{\ell(H_s,\xi)}$ with loss function as in \eqref{eq:FBloss} provides an equivalent information on the distance of $x^{K+1}$ from a fixed point of the dynamics produced by iterating the operator $\resolvent{J}{\gamma\mc A}\circ(\Id-\gamma\mc B)$ with perfect knowledge on $\mc B$.
	Therefore, with probability at least $1-\delta$, applying the bound in \eqref{eq:exp_bound_replacement} leads to $\norm{\resolvent{J}{\gamma\mc A}(x^{K+1}-\gamma\mc B(x^{K+1}))-x^{K+1}}\le\epsilon$ with
	\ifTwoColumn
		\begin{multline}
			\epsilon\eqdef\frac{1}{s} \sum_{i=1}^{s}\norm{x^{K+1}-\gamma \mc O(x^{K+1},\xi^{(i)})-y^K}\\
			+\frac{2\gamma M(1+\tau)}{s(1-\tau)}+\left(\frac{2\gamma M(1+\tau)}{1-\tau}+\bar\ell\right)\sqrt{\frac{2\ln(1/\delta)}{s}}.\notag
		\end{multline}
	\else
		\[
			\epsilon\eqdef\frac{1}{s} \sum_{i=1}^{s}\norm{x^{K+1}-\gamma \mc O(x^{K+1},\xi^{(i)})-y^K}+\frac{2\gamma M(1+\tau)}{s(1-\tau)}+\left(\frac{2\gamma M(1+\tau)}{1-\tau}+\bar\ell\right)\sqrt{\frac{2\ln(1/\delta)}{s}}.
		\]
	\fi
	This allows us to conclude that $x^{K+1}$ is an $\varepsilon$-zero of the sum of $\mc A$ and $\mc B$, according to \eqref{eq:eps_zero}, with $\varepsilon\eqdef\epsilon/\gamma$. In fact, since $\norm{\resolvent{J}{\gamma\mc A}(x^{K+1}-\gamma\mc B(x^{K+1}))-x^{K+1}}\le\epsilon$, we directly have:
		\[
		\begin{aligned}
			\epsilon&\ge
%			\norm{\resolvent{J}{\gamma\mc A}(x^{K+1}-\gamma\mc B(x^{K+1}))-x^{K+1}}=
			\norm{(\Id+\gamma\mc A)^{-1}\circ(\Id-\gamma\mc B)(x^{K+1})-x^{K+1}}\\
			&=\norm{(\Id-\gamma\mc B)(x^{K+1})-(\Id+\gamma\mc A)(x^{K+1})}\\
			%&=\norm{-\gamma\mc B(x^{K+1})-\gamma\mc A(x^{K+1})}=\gamma\norm{\mc A(x^{K+1})+\mc B(x^{K+1})},
			&=\norm{-\gamma\mc B(x^{K+1})-\gamma z}=\gamma\norm{z+\mc B(x^{K+1})},
		\end{aligned}
		\]
		in view of the single-valuedness of the resolvent $\resolvent{J}{\gamma\mc A}(\cdot)$ and its properties, where $z\in\mc A(x^{K+1})$, concluding the proof.
\end{proof}

\subsection{Weaker assumptions yield $K$-dependent stability bounds}\label{sec:coco}
Compared to \S \ref{sec_stab_strong}, we now weaken the assumptions on the operator $\mc B$ and, as a consequence, on the oracle $\mc O$.
\begin{assumption}\label{ass:COCO}
	The operator $\mc A:\Omega\to2^\Omega$ is maximally monotone, while $\mc B:\Omega\to\Omega$ is $\theta$-cocoercive, $\theta>0$, and $\gamma\in(0,2\theta)$.
	\hfill$\square$
\end{assumption}
According to \cite[Th.~25.8]{bauschke2017correction}, under Assumption~\ref{ass:COCO} the sequence $\{x^k\}_{k\in\N}$ produced by Algorithm~\ref{alg:FB} converges to some $x^\star\in\zer(\mc A+\mc B)$ (following the notation of \cite[Th.~25.8]{bauschke2017correction}, $\lambda^k=1$ is indeed a valid choice since $\delta$ there is strictly greater than $1$).
In view of the discussion in \S \ref{subsec:problem_statement}, the noisy oracle inherits the same conditions made on $\mc B$. In this case, $\mc O(\cdot,\xi)$ is hence $\theta$-cocoercive for a given $\xi\in\Xi$. We have the following result:
\begin{lemma}\label{lemma:cocoercivity}
	For fixed $s>0$, let $\{\xi^{(1)},\ldots,\xi^{(s)}\}$ be a given set of samples with $\xi^{(i)}\in\Xi$, for all $i=1,\ldots,s$. Then, the operator $\sum_{i=1}^s \mc O(\cdot,\xi^{(i)})$ is $(\theta/s)$-cocoercive.
	\hfill$\square$
\end{lemma}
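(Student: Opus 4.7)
The plan is to show cocoercivity directly from the definition, exploiting the fact that cocoercivity of each summand carries over additively on the left-hand inner product but only sub-additively on the right-hand squared norm, which is exactly what produces the factor $1/s$.

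First I would fix arbitrary $x,y\in\Omega$ and, for each $i\in\{1,\dots,s\}$, invoke the $\theta$-cocoercivity of $\mc O(\cdot,\xi^{(i)})$ inherited from the discussion preceding the statement, namely
\[
\inner{\mc O(x,\xi^{(i)})-\mc O(y,\xi^{(i)})}{x-y}\ge\theta\,\norm{\mc O(x,\xi^{(i)})-\mc O(y,\xi^{(i)})}^2.
\]
Summing this inequality over $i=1,\dots,s$ and exploiting the bilinearity of the inner product yields
\[
\inner{\textstyle\sum_{i=1}^s\mc O(x,\xi^{(i)})-\sum_{i=1}^s\mc O(y,\xi^{(i)})}{x-y}\ge\theta\sum_{i=1}^s\norm{\mc O(x,\xi^{(i)})-\mc O(y,\xi^{(i)})}^2.
\]

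The next step is to lower bound the right-hand side by a multiple of $\|\sum_{i=1}^s(\mc O(x,\xi^{(i)})-\mc O(y,\xi^{(i)}))\|^2$. Setting $v_i\eqdef\mc O(x,\xi^{(i)})-\mc O(y,\xi^{(i)})$, a standard application of the Cauchy-Schwarz (or, equivalently, Jensen's) inequality gives $\|\sum_{i=1}^s v_i\|^2\le s\sum_{i=1}^s\|v_i\|^2$, hence $\sum_{i=1}^s\|v_i\|^2\ge\tfrac{1}{s}\|\sum_{i=1}^s v_i\|^2$. Combining this with the inequality displayed above produces
\[
\inner{\textstyle\sum_{i=1}^s\mc O(x,\xi^{(i)})-\sum_{i=1}^s\mc O(y,\xi^{(i)})}{x-y}\ge\frac{\theta}{s}\norm{\textstyle\sum_{i=1}^s\mc O(x,\xi^{(i)})-\sum_{i=1}^s\mc O(y,\xi^{(i)})}^2,
\]
which is exactly $(\theta/s)$-cocoercivity of $\sum_{i=1}^s\mc O(\cdot,\xi^{(i)})$ per the definition recalled in \S \ref{sec:intro}.

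There is essentially no obstacle here; the only delicate point is the direction of the Cauchy-Schwarz bound, since cocoercivity requires a \emph{lower} bound on a sum of squared norms in terms of the squared norm of the sum, and it is precisely the loss of a factor $1/s$ in that step that accounts for the degradation of the cocoercivity constant from $\theta$ to $\theta/s$. Note that this immediately implies the corresponding property needed in \S \ref{sec:coco} for the approximation $\hat{\mc B}_s=\tfrac{1}{s}\sum_{i=1}^s\mc O(\cdot,\xi^{(i)})$: scaling a $(\theta/s)$-cocoercive operator by $1/s$ produces a $\theta$-cocoercive operator in the inner-product sense up to the scaling, which can then be fed into the forward-backward analysis with an appropriately adjusted learning rate.
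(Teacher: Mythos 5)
Your argument is correct and uses exactly the same two ingredients as the paper's proof---the termwise $\theta$-cocoercivity of $\mc O(\cdot,\xi^{(i)})$ and the inequality $\|\sum_{i=1}^s v_i\|^2\le s\sum_{i=1}^s\|v_i\|^2$---merely chaining them in the opposite order (the paper starts from the squared norm of the sum and works toward the inner product). The concluding observation that the average $\hat{\mc B}_s$ is then $\theta$-cocoercive is also consistent with how the lemma is used in \S\ref{sec:coco}.
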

%\blue{Barbara: there might be a result already in Bauschke} \tr{FF: I couldn't find it so a proved it again. It's trivial.}
\begin{proof}
	Given any $x$, $y\in\R^n$, the following relations hold true:
	\ifTwoColumn
		\[
		\begin{aligned}
			&\norm{\sum_{i=1}^s \mc O(x,\xi^{(i)})-\sum_{i=1}^s \mc O(y,\xi^{(i)})}^2\\
			&\hspace{2.5cm}\le s \sum_{i=1}^s \norm{\mc O(x,\xi^{(i)})-\mc O(y,\xi^{(i)})}^2\\
			&\hspace{2.5cm}\le \frac{s}{\theta} \sum_{i=1}^s\inner{x-y}{\mc O(x,\xi^{(i)})-\mc O(y,\xi^{(i)})}\\
			&\hspace{2.5cm}=\frac{s}{\theta}\inner{x-y}{\sum_{i=1}^s (\mc O(x,\xi^{(i)})-\mc O(y,\xi^{(i)}))}\\
			&\hspace{2.5cm}=\frac{s}{\theta}\inner{x-y}{\sum_{i=1}^s \mc O(x,\xi^{(i)})- \sum_{i=1}^s \mc O(y,\xi^{(i)})},
		\end{aligned}
		\]	
	\else
		\[
		\begin{aligned}
			\norm{\sum_{i=1}^s \mc O(x,\xi^{(i)})-\sum_{i=1}^s \mc O(y,\xi^{(i)})}^2&\le s \sum_{i=1}^s \norm{\mc O(x,\xi^{(i)})-\mc O(y,\xi^{(i)})}^2\\
			&\le \frac{s}{\theta} \sum_{i=1}^s\inner{x-y}{\mc O(x,\xi^{(i)})-\mc O(y,\xi^{(i)})}\\
			&=\frac{s}{\theta}\inner{x-y}{\sum_{i=1}^s (\mc O(x,\xi^{(i)})-\mc O(y,\xi^{(i)}))}\\
			&=\frac{s}{\theta}\inner{x-y}{\sum_{i=1}^s \mc O(x,\xi^{(i)})- \sum_{i=1}^s \mc O(y,\xi^{(i)})},
		\end{aligned}
		\]	
	\fi
	where in the second inequality we have used the $\theta$-cocoercivity of $\mc O(\cdot,\xi)$ for fixed $\xi\in\Xi$.
\end{proof}
We then have uniform stability of Algorithm~\ref{alg:FBmod} depending on the number of iterations: 
\begin{lemma}\label{lemma:FB_unif_stab_general}
	Under Assumption~\ref{ass:COCO}, Algorithm~\ref{alg:FBmod} possesses $(4\gamma MK/s)$-uniform stability \gls{wrt} the loss function $\ell(H,\xi)$ in~\eqref{eq:FBloss}.
	\hfill$\square$
\end{lemma}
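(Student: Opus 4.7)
The plan is to mirror the proof of Lemma~\ref{lemma:FB_unif_stab} while replacing every appearance of the contraction constant $\tau<1$ by the nonexpansiveness constant $1$, which is all that can be guaranteed under the weaker Assumption~\ref{ass:COCO}. The absence of a strict contraction means the per-iteration perturbation induced by a single-sample replacement will accumulate additively rather than geometrically, and this is precisely what will produce the linear-in-$K$ stability bound announced in the statement.

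I would fix $s\ge 1$ and two datasets $\mc D_s$, $\mc D_s^i\in\Xi^s$ differing only at the $i$-th instance, denote by $H_s=\omega^K$ and $H_{s^i}=\tilde\omega^K$ the hypotheses produced by Algorithm~\ref{alg:FBmod} from the same $x^0$, and pick any $\xi\in\Xi$. The first step is to redo the chain \eqref{eq_lemma2} under cocoercivity: since by \S \ref{subsec:problem_statement} the oracle $\mc O(\cdot,\xi)$ inherits $\theta$-cocoercivity from $\mc B$, the standard Baillon--Haddad-type expansion of $\|(x-\gamma\mc O(x,\xi))-(y-\gamma\mc O(y,\xi))\|^2$ shows that $\Id-\gamma\mc O(\cdot,\xi)$ is nonexpansive whenever $\gamma\in(0,2\theta)$; together with the firm nonexpansiveness of $\resolvent{J}{\gamma\mc A}$ this yields
\[
|\ell(H_s,\xi)-\ell(H_{s^i},\xi)|\le \|x^{K+1}-\tilde x^{K+1}\|+\|y^K-\tilde y^K\|\le 2\|y^K-\tilde y^K\|,
\]
which is exactly step (c) of \eqref{eq_lemma2} with $\tau$ set to $1$.

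The second step is to control $\|y^k-\tilde y^k\|$ recursively. Lemma~\ref{lemma:cocoercivity} readily gives that the average $\hat{\mc B}_s$ is $\theta$-cocoercive, hence $\Id-\gamma\hat{\mc B}_s$ is nonexpansive for $\gamma\in(0,2\theta)$. An ``add-and-subtract $\gamma\hat{\mc B}_s(\tilde x^k)$'' argument, together with Standing Assumption~\ref{standing:boundedness} to bound the single differing term by $2\gamma M/s$ and the firm nonexpansiveness of the resolvent to pass from $\|x^k-\tilde x^k\|$ to $\|y^{k-1}-\tilde y^{k-1}\|$, then produces the recursion
\[
\|y^k-\tilde y^k\|\le\|y^{k-1}-\tilde y^{k-1}\|+\tfrac{2\gamma M}{s}.
\]
Telescoping over the $K$ updates (i.e., replacing the geometric sum $\sum_{j=1}^{K}\tau^{K-j}$ in the display at the end of the proof of Lemma~\ref{lemma:FB_unif_stab} by $\sum_{j=1}^{K}1=K$) with the common initial condition $x^0=\tilde x^0$ yields $\|y^K-\tilde y^K\|\le 2\gamma MK/s$, and substituting this into the previous bound delivers the claimed $(4\gamma MK/s)$-uniform stability.

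The only conceptually new point, and the main obstacle compared to Lemma~\ref{lemma:FB_unif_stab}, is the absence of a contraction factor strictly less than $1$: the geometric sum $\sum_{j=1}^{K}\tau^{K-j}\le 1/(1-\tau)$ that made stability $K$-independent there collapses to $K$ here, and this linear-in-$K$ growth cannot be avoided using only nonexpansiveness arguments. Equivalently, one could invoke the generalized growth lemma of \cite{farnia2021train} with contraction parameter $1$ to reach exactly the same conclusion.
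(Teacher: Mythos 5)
Your proposal is correct and follows essentially the same route as the paper's proof: the same reduction $|\ell(H_s,\xi)-\ell(H_{s^i},\xi)|\le 2\|y^K-\tilde y^K\|$ via nonexpansiveness of $\Id-\gamma\mc O(\cdot,\xi)$ (from $\theta$-cocoercivity and $\gamma\in(0,2\theta)$) and firm nonexpansiveness of $\resolvent{J}{\gamma\mc A}$, followed by the same split-off-the-differing-sample recursion $\|y^k-\tilde y^k\|\le\|y^{k-1}-\tilde y^{k-1}\|+2\gamma M/s$ using Lemma~\ref{lemma:cocoercivity} and Standing Assumption~\ref{standing:boundedness}, telescoped to $2\gamma MK/s$. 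Your closing observation that the geometric sum of Lemma~\ref{lemma:FB_unif_stab} collapses to $K$ is exactly the mechanism the paper exploits.
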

\begin{proof}
	Fix some $s\geq1$, and consider two datasets, $\mc D_s$, $\mc D_s^i\in\Xi^s$, both consisting of $s$-\gls{iid} random samples and differing on the $i$-th instance only, i.e., some $\xi'$ replaces $\xi^{(i)}$ in $\mc D_s^i$. After iterating Algorithm~\ref{alg:FBmod} for $K\ge1$ times by leveraging samples in $\mc D_s$ and $\mc D_s^i$, and starting from the same $x^0\in\R^n$, we obtain $H_s=\omega^K$ and $H_{s^i}=\tilde{\omega}^K$ as the two corresponding hypotheses. By picking an arbitrary $\xi\in\Xi$ we immediately obtain: 
	\ifTwoColumn
		\[
		\begin{aligned}
			&|\ell(H_s,\xi)-\ell(H_{s^{i}},\xi)|\\
			&\stackrel{(a)}{\le}\norm{
				x^{K+1}\!-\!\gamma \mc O(x^{K+1},\xi)\!-\!\tilde x^{K+1}\!+\!\gamma \mc O(\tilde x^{K+1},\xi)
				\!-\! y^K \!+\!\tilde y^K}\\
			&\stackrel{(b)}{\le}\norm{x^{K+1}-\tilde x^{K+1}}+\norm{y^K-\tilde y^K}\\
			&=\norm{\resolvent{J}{\gamma\mc A}(y^K)-\resolvent{J}{\gamma\mc A}(\tilde y^K)}+\norm{y^K-\tilde y^K}\\
			&\stackrel{(c)}{\le}2\norm{y^K-\tilde y^K},
		\end{aligned}
		\]	
	\else 
		\[
		\begin{aligned}
			|\ell(H_s,\xi)-\ell(H_{s^{i}},\xi)|&\stackrel{(a)}{\le}\norm{
				x^{K+1}-\gamma \mc O(x^{K+1},\xi)-\tilde x^{K+1}+\gamma \mc O(\tilde x^{K+1},\xi)
				-y^K+\tilde y^K}\\
			&\stackrel{(b)}{\le}\norm{x^{K+1}-\tilde x^{K+1}}+\norm{y^K-\tilde y^K}\\
			&=\norm{\resolvent{J}{\gamma\mc A}(y^K)-\resolvent{J}{\gamma\mc A}(\tilde y^K)}+\norm{y^K-\tilde y^K}\\
			&\stackrel{(c)}{\le}2\norm{y^K-\tilde y^K},
		\end{aligned}
		\] 
	\fi
	\sloppy where (a) follows in view of the reverse triangle inequality, (b) since in view of Assumption~\ref{ass:COCO} the operator $\mc O(\cdot,\xi)$ is $\theta$-cocoercive and $\gamma\in(0,2\theta)$, and hence we know from \cite[Prop.~4.33]{bauschke2017correction} that $\Id(\cdot)-\gamma \mc O(\cdot,\xi)$ is $(\gamma/2\theta)$-averaged, yielding $\norm{x^{K+1}-\gamma \mc O(x^{K+1},\xi)-(\tilde x^{K+1}-\gamma \mc O(\tilde x^{K+1},\xi))}^2\le\norm{x^{K+1}-\tilde x^{K+1}}^2-\tfrac{2\theta-\gamma}{\gamma}\norm{\gamma \mc O(x^{K+1},\xi)-\gamma \mc O(\tilde x^{K+1},\xi)}^2\le\norm{x^{K+1}-\tilde x^{K+1}}^2$, and (c) since $\resolvent{J}{\gamma\mc A}(\cdot)$ is the resolvent of the maximally monotone operator $\mc A$, and hence firmly nonexpansive \cite[Cor.~23.10]{bauschke2017correction}, i.e., for all $\gamma>0$ it satisfies $\norm{\resolvent{J}{\gamma\mc A}(y^K)-\resolvent{J}{\gamma\mc A}(\tilde y^K)}^2\le\norm{y^K-\tilde y^K}^2-\norm{(y^K-\resolvent{J}{\gamma\mc A}(y^K))-(\tilde y^K-\resolvent{J}{\gamma\mc A}(\tilde y^K))}^2\le\norm{y^K-\tilde y^K}^2$.
	According to Definition~\ref{def:unif_stab}, also in this case it remains to bound $\|y^K-\tilde y^K\|$, possibly as a function of $1/s$.
	
	To this end, we note that $\|y^K-\tilde y^K\|=\|x^K-\frac{\gamma}{s} \sum_{j=1}^s  \mc O(x^K,\xi^{(j)})-(\tilde x^K-\frac{\gamma}{s} \sum_{j=1, j\neq i}^s \mc O(\tilde x^K,\xi^{(j)})-\frac{\gamma}{s} \mc O(\tilde x^K,\xi'))\|$. More generally, at the $k$-th iteration we have:
	\ifTwoColumn
		\[
		\begin{aligned}
			&\|y^k-\tilde y^k\|\\
			&=\norm{x^k-\frac{\gamma}{s} \sum_{j=1}^s  \mc O(x^k,\xi^{(j)})-\tilde x^k+\frac{\gamma}{s} \sum_{j=1}^s  \mc O(\tilde x^k,\xi^{(j)})}\\
			&\le\norm{x^k-\frac{\gamma}{s} \sum_{\stackrel{j=1}{j\neq i}}^s  \mc O(x^k,\xi^{(j)})-\left(\tilde x^k-\frac{\gamma}{s} \sum_{\stackrel{j=1}{j\neq i}}^s  \mc O(\tilde x^k,\xi^{(j)})\right)}\\
			&\hspace{3.4cm}+\norm{\frac{\gamma}{s} \mc O(x^k,\xi^{(i)})}+\norm{\frac{\gamma}{s} \mc O(\tilde x^k,\xi')}\\
			&\le\norm{x^k-\tilde x^k}+\frac{2\gamma M}{s}=\norm{\resolvent{J}{\gamma\mc A}(y^{k-1})-\resolvent{J}{\gamma\mc A}(\tilde y^{k-1})}+\frac{2\gamma M}{s}\\
			&\le\norm{y^{k-1}-\tilde y^{k-1}}+\frac{2\gamma M}{s},
		\end{aligned}
		\]
	\else
		\[
		\begin{aligned}
			\|y^k-\tilde y^k\|&=\norm{x^k-\frac{\gamma}{s} \sum_{j=1}^s  \mc O(x^k,\xi^{(j)})-\tilde x^k+\frac{\gamma}{s} \sum_{j=1}^s  \mc O(\tilde x^k,\xi^{(j)})}\\
			&\le\norm{x^k-\frac{\gamma}{s} \sum_{\stackrel{j=1}{j\neq i}}^s  \mc O(x^k,\xi^{(j)})-\left(\tilde x^k-\frac{\gamma}{s} \sum_{\stackrel{j=1}{j\neq i}}^s  \mc O(\tilde x^k,\xi^{(j)})\right)}\\
			&\hspace{7cm}+\norm{\frac{\gamma}{s} \mc O(x^k,\xi^{(i)})}+\norm{\frac{\gamma}{s} \mc O(\tilde x^k,\xi')}\\
			&\le\norm{x^k-\tilde x^k}+\frac{2\gamma M}{s}=\norm{\resolvent{J}{\gamma\mc A}(y^{k-1})-\resolvent{J}{\gamma\mc A}(\tilde y^{k-1})}+\frac{2\gamma M}{s}\\
			&\le\norm{y^{k-1}-\tilde y^{k-1}}+\frac{2\gamma M}{s},
		\end{aligned}
		\] 
	\fi
	where the second to last inequality follows by combining Lemma~\ref{lemma:cocoercivity} with the boundedness of the operator $\mc O(\cdot,\xi)$ in Standing Assumption~\ref{standing:boundedness}. From \cite[Prop.~4.33]{bauschke2017correction}, the main consequence of Lemma~\ref{lemma:cocoercivity} is indeed the $(\gamma/2\theta)$-averagedness of the operator $\Id(\cdot)-\tfrac{\gamma}{s}\sum_{\stackrel{j=1}{j\neq i}}^s\mc O(\cdot,\xi^{(j)})$ when $\gamma<2\theta$, which is imposed through Assumption~\ref{ass:COCO}. The last inequality then follows again due to the firm nonexpansiveness of $\resolvent{J}{\gamma\mc A}(\cdot)$.
	
	Thus, recursing over the $K$-steps by considering that i) the resolvent operator $\resolvent{J}{\gamma\mc A}(\cdot)$ is (firmly) nonexpansive, and ii) both trajectories leading to $y^K$ and $\tilde y^K$ originate from the same initial condition $x^0\in\R^n$, yields
	$\|y^K-\tilde y^K\|\le2\gamma M K/s$.
	Putting everything together, we finally obtain what follows:
	\[
	|\ell(H_s,\xi)-\ell(H_{s^{i}},\xi)|\le\frac{4\gamma M K}{s},
	\]
	which proves uniform stability with bound inverse to $s$ and linear in $K$, concluding the proof.
\end{proof}

We can finally prove the following sample complexity bound:
\begin{theorem}\label{th:FB_sample_complexity_general}
	Fix $s\ge1$ and $\delta\in(0,1)$. Given any dataset $\mc D_s\in\Xi^s$, under Assumption~\ref{ass:COCO} there exists $\varepsilon=\varepsilon(s,\delta,K)>0$ such that $x^{K+1}\in\zer_\varepsilon(\mc A+\mc B)$ with probability at least $1-\delta$, where $x^{K+1}$ is obtained by iterating Algorithm~\ref{alg:FBmod} $K$-times.
	\hfill$\square$
\end{theorem}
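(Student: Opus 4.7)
The plan is to mirror the proof of Theorem~\ref{th:FB_sample_complexity} almost verbatim, substituting Lemma~\ref{lemma:FB_unif_stab_general} for Lemma~\ref{lemma:FB_unif_stab}; the key conceptual point is that Assumption~\ref{ass:COCO} is still strong enough for the operator-theoretic identity $\zer(\mc A+\mc B)=\fix(\resolvent{J}{\gamma\mc A}\circ(\Id-\gamma\mc B))$ (from \cite[Prop.~25.1(iv)]{bauschke2017correction}) and for firm nonexpansiveness of $\resolvent{J}{\gamma\mc A}$, both of which only require $\mc A$ to be maximally monotone. So the whole sandwich of inequalities used to control the fixed-point residual carries over unchanged.

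First I would reuse the chain of inequalities from the proof of Theorem~\ref{th:FB_sample_complexity} to show
\[
\norm{\resolvent{J}{\gamma\mc A}(x^{K+1}-\gamma\mc B(x^{K+1}))-x^{K+1}}\le r(A,s),
\]
inserting the resolvent of the true $\mc B$ via Standing Assumption~\ref{standing:unbiased} (unbiasedness of $\mc O$), pulling the expectation out of the resolvent by firm nonexpansiveness, using that once $\mc D_s$ is fixed Algorithm~\ref{alg:FBmod} is deterministic so $y^K=x^{K+1}$ can be pushed inside $\expected{\prob{P}}{\cdot}$, and finally invoking Jensen's inequality. Nothing in this step uses strong monotonicity, so Assumption~\ref{ass:COCO} suffices.

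Next, I would apply the exponential bound in \eqref{eq:exp_bound_replacement} with the uniform-stability constant $\beta=4\gamma M K/s$ supplied by Lemma~\ref{lemma:FB_unif_stab_general}. With probability at least $1-\delta$ this yields
\[
r(A,s)\le \hat r(A,s)+\frac{4\gamma M K}{s}+\left(4\gamma M K+\bar\ell\right)\sqrt{\frac{2\ln(1/\delta)}{s}}\reqdef\epsilon,
\]
where $\hat r(A,s)=\tfrac{1}{s}\sum_{i=1}^{s}\norm{x^{K+1}-\gamma\mc O(x^{K+1},\xi^{(i)})-y^K}$ is the empirical loss, fully computable from the run.

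Finally, I would convert the fixed-point residual bound into membership in $\zer_\varepsilon(\mc A+\mc B)$ exactly as in the proof of Theorem~\ref{th:FB_sample_complexity}: using single-valuedness of $\resolvent{J}{\gamma\mc A}$ and the defining identity $(\Id-\gamma\mc B)(x^{K+1})=(\Id+\gamma\mc A)(x^{K+1})+\text{residual}$ gives $\epsilon\ge\gamma\norm{z+\mc B(x^{K+1})}$ for some $z\in\mc A(x^{K+1})$, so that $\varepsilon\eqdef\epsilon/\gamma$ meets the definition in \eqref{eq:eps_zero}. I expect no real obstacle here; the only novelty relative to Theorem~\ref{th:FB_sample_complexity} is that $\beta$ now scales as $K/s$ rather than $1/s$, so the $(s\beta+\bar\ell)$ factor in the exponential bound becomes $K$-dependent, which is exactly what makes $\varepsilon=\varepsilon(s,\delta,K)$ grow with the number of iterations. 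This matches the contribution (ii) in the introduction, where weaker assumptions are shown to give stability proportional to $K$.
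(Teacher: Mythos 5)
Your proposal is correct and follows essentially the same route as the paper: the paper's proof of this theorem simply reuses the inequality chain from Theorem~\ref{th:FB_sample_complexity} (which, as you note, only needs maximal monotonicity of $\mc A$, unbiasedness of the oracle, determinism of the output, and Jensen's inequality), plugs the $K$-dependent stability constant $4\gamma MK/s$ from Lemma~\ref{lemma:FB_unif_stab_general} into the bound \eqref{eq:exp_bound_replacement}, and sets $\varepsilon=\epsilon/\gamma$, yielding exactly the expression you wrote. The only nitpick is the phrase ``$y^K=x^{K+1}$ can be pushed inside the expectation,'' which should read that $y^K$ \emph{and} $x^{K+1}$ are deterministic once $\mc D_s$ is fixed and hence can both be moved inside $\expected{\prob{P}}{\cdot}$.
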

\begin{proof}
	As in the proof of Theorem~\ref{th:FB_sample_complexity}, upper bounding the risk $\expected{\prob{P}}{\ell(H_s,\xi)}$ provides an equivalent information on the distance of $x^{K+1}$ from a fixed point of the dynamics produced by iterating the operator $\resolvent{J}{\gamma\mc A}\circ(\Id-\gamma\mc B)$.
	Therefore, with probability at least $1-\delta$, applying the bound in \eqref{eq:exp_bound_replacement} leads to $\norm{\resolvent{J}{\gamma\mc A}(x^{K+1}-\gamma\mc B(x^{K+1}))-x^{K+1}}\le\epsilon$ with
	\ifTwoColumn
		\begin{multline}
			\epsilon\eqdef\frac{1}{s} \sum_{i=1}^{s}\norm{x^{K+1}-\gamma \mc O(x^{K+1},\xi^{(i)})-y^K}\\
			+\frac{4\gamma M K}{s}+(4\gamma M K+\bar\ell)\sqrt{\frac{2\ln(1/\delta)}{s}}.\notag
		\end{multline}
	\else
		\[
		\epsilon\eqdef\frac{1}{s} \sum_{i=1}^{s}\norm{x^{K+1}-\gamma \mc O(x^{K+1},\xi^{(i)})-y^K}\\
		+\frac{4\gamma M K}{s}+(4\gamma M K+\bar\ell)\sqrt{\frac{2\ln(1/\delta)}{s}}.\notag
		\]
	\fi
	Akin to the conclusion of the proof of Theorem~\ref{th:FB_sample_complexity}, $x^{K+1}$ is then an $\varepsilon$-zero in the spirit of \eqref{eq:eps_zero}, with $\varepsilon\eqdef\epsilon/\gamma$.
\end{proof}

\begin{remark}
	Our results are not only consistent with, but also generalize existing ones on \gls{SGD} in convex optimization, since our operator-theoretic framework embraces a wider class of problems. When the cost function at hand is simply convex, \gls{SGD} is characterized by a stability bound proportional to the number of iterations performed \cite[Th.~3.8]{hardt2016train}. Consistently, in \S \ref{sec:coco} we show that Algorithm~\ref{alg:FBmod} exhibits a stability bound proportional to $K$ when the operator $\mc B$ is merely cocoercive. Conversely, if the cost is strongly convex, there is no dependency on the number of iterations \cite[Th.~3.9]{hardt2016train}, and we obtain an analogous result in \S \ref{sec_stab_strong} with strongly monotone $\mc B$.
%	Extending available results in convex optimization \cite{hardt2016train} to monotone mappings, we note that, interestingly enough, adopting the weaker monotonicity conditions granting convergence of Algorithm~\ref{alg:FB}, i.e., Assumption~\ref{ass:COCO}, lead to a stability bound for Algorithm~\ref{alg:FBmod} proportional to $K$. Stronger assumptions, instead, make stability independent on the number of iterations performed. This is consistent with \cite{hardt2016train}.
%	This is consistent, and actually complement, what was observed in   
%	\blue{Barbara: reminder - this could be a nice thing to show in the simulations}
	\hfill$\square$
\end{remark}

\subsection{Discussion on our results}\label{subsec:discussion}
The generalization properties established for Algorithm~\ref{alg:FBmod} hence provide computable bounds on the distance between its output, after a finite number of steps, and a zero of the operator $\mc A + \mc B$. As discussed next, this is particularly relevant for real-world applications where data collection through repeated experiments is either prohibitively expensive or infeasible, thus calling for the efficient use of the available, limited samples:
\begin{itemize}
	\item Reliability of the solution: Offering high confidence-type of bounds makes the obtained $x^{K+1}$ reliable \gls{wrt} any possible dataset drawn from $\Xi^s$. No matter what $\mc D_s$ is employed to run Algorithm~\ref{alg:FBmod}, the solution produced will then always be an $\varepsilon$-zero of $\mc A+\mc B$, with $\varepsilon$ vanishing as $O(s^{-1/2})$, up to a set of arbitrarily small measure $\delta$;
%	\blue{Barbara: Pessimistic comment. It is nice although a bit counter intuitive that whatever the samples we can have a 'close enough' solution. It doesn't really compromise my results (as I guarantee that eventually we get the real sol) but rather says that these bounds are not that tight. It could be an idea for the simulations tho to verify not only the bound but the different performance between the increasing batch - as i do - and a fixed - finite - one, even in terms of computation cost to show if there is a tradeoff between being close and being fast.} \red{FF: Sure! It is not an actual pessimistic comment indeed, but just an insight on what our results offer.}
	\item Easy-to-compute bounds: Although \emph{a-posteriori}, i.e., once $x^{K+1}$ is revealed, the specific choice of the loss function in \eqref{eq:FBloss}, and hence the empirical risk $\hat r(A, s)$ associated, makes our bounds particularly easy to compute. Notice that for our purposes, i.e., to characterize $x^{K+1}$ as an $\varepsilon$-zero of $\mc A+\mc B$, we could even have chosen $\ell(H,\xi)=\norm{x^\star- [0_n\quad I_n] H}=\norm{x^\star-\resolvent{J}{\gamma\mc A}(x-\gamma \mc O(x,\xi))}$ for some $x^\star\in\zer(\mc A+\mc B)$, thereby obtaining similar stability bounds as in Lemmas~\ref{lemma:FB_unif_stab} and \ref{lemma:FB_unif_stab_general}. However, computing $\varepsilon$ in this case is prohibitive unless one knows in advance such an $x^\star$, making the zero-finding problem meaningless;
%	\item[Adjustable $\varepsilon$:] Notice the twofold role played by the parameter $\gamma$. While it shall be traditionally tuned to maximize the ``contractivity'' over the $y$'s iterations, selecting a small $\gamma$ (under Assumption~\ref{ass:COCO}, of the same order as $K$) allows one to establish tighter bounds. Moreover, the relation derived in Lemma~\ref{lemma:FB_unif_stab_general} suggests that running Algorithm~\ref{alg:FBmod} for few iterations improves its stability, making this practice more convenient than using a larger $K$;
%	\item[Consistency and finite-step convergence:] If an infinite number of samples were available, exact convergence to a zero could even be achieved in a finite number of steps. This contrasts with standard approximation methods, such as \gls{SAA} or \gls{SA}, which require running the data-driven algorithm asymptotically.
%	Moreover, in contrast with traditional \gls{SA} or \gls{SAA} techniques that may be applied, our results are based on a given realization of the dataset, which we only require consisting of \gls{iid} samples, and hence we do not need to collect additional data on the fly.
	\item Distribution-free: Along the line of results based on non-parametric statistics and learning theory, such as \gls{PAC} learning \cite{vidyasagar2013learning} or scenario approach theory \cite{campi2018introduction}, we further emphasize that we only require the data to be \gls{iid}. Our bounds are thus derived without imposing any specific condition on the unknown probability distribution $\prob{P}$ underlying available samples. This provides robust guarantees regardless of how skewed or heavy-tailed the true distribution might be.
\end{itemize}

%We will explore these features in the numerical simulations reported in \S \ref{sec:simulations}. 

%%%%%%%%%%%%%%%%%%%%%%%%%%%%%%%%%%%%%%%%%%%%
\section{Finite-sample guarantees for equilibrium seeking in stochastic Nash games}\label{sec:SGNEP}
As a suitable motivating application, we now illustrate how a standard \gls{NE} seeking algorithm in a stochastic regime fits the framework considered in this paper, and hence benefit the sample complexity bounds developed in \S \ref{sec:FB}.

\subsection{A stochastic Nash equilibrium problem}
We consider a set of $N$ self-interested agents, indexed by $\mc I\eqdef\{1,\ldots,N\}$, where each of them controls a strategy $x_i\in\Omega_i\subseteq\R^{n_i}$. The goal is then to simultaneously solve a collection of mutually coupled stochastic optimization problems:
\begin{equation}\label{eq:sNEP}
	\forall i\in\mc I:\underset{x_i\in\R^{n_i}}{\textrm{min}}~\expected{\prob{P}}{J_i(x_i,\bs x_{-i},\xi)}+g_i(x_i),
\end{equation}
where each $J_i:\R^n\times\R^d\to\R$ is some measurable function so that, together with $g_i:\R^{n_i}\to\R$, constitutes the cost of the $i$-th agent, where $n\eqdef\sum_{i\in\mc I}^{}n_i$ and $\bs x_{-i}\eqdef\col((x_j)_{j\in\mc I\setminus\{i\}})\in\R^{n-n_i}$. The vector $\xi\in\R^d$ denotes, instead, some uncertainty affecting each $J_i$, and corresponds to a random variable taking values in some $\Xi\subseteq\R^d$ according to a possibly unknown probability distribution $\prob{P}$. Each function $g_i(\cdot)$ typically represents a nonsmooth term modeling local constraints via an additive indicator function $\iota_{\Omega_i}(\cdot)$. 
%or barrier function defined over some set of local constraints $\Omega_i\subseteq\R^{n_i}$. 
Thus, the cost function depends on the local variable $x_i$, on the decision of the other agents $\bs x_{-i}$, and on the random variable $\xi$.

Let $\mathbb{J}_i(x_i,\bs x_{-i})\eqdef\expected{\prob{P}}{J_i(x_i,\bs x_{-i},\xi)}+g_i(x_i)$ and $\Omega\eqdef\prod_{i\in\mc I} \Omega_i$.  Aiming at solving the collection of optimization problems in \eqref{eq:sNEP} simultaneously, thereby resulting into a \gls{SNEP} \cite{lei2022stochastic}, the solution concept of our interest is formalized next:
\begin{definition}[\textup{\hspace{-.02cm}{\cite[Definition 2.1]{ravat2011characterization}} Stochastic Nash equilibrium}]\label{def:SNE}
	A collective decision vector $\bs x^\star\in\R^n$ is a \gls{SNE} if, for all $i\in\mc I$,
	\[
		\mathbb{J}_i(x^\star_i,\bs x^\star_{-i})\le\underset{y_i\in\R^{n_i}}{\normaltext{\textrm{inf}}}~\mathbb{J}_i(y_i,\bs x^\star_{-i}).\vspace{-.6cm}
	\]
	\hfill$\square$
\end{definition}
In words, an \gls{SNE} is a set of strategies where no agent can decrease its cost function by unilaterally deviating from its decision. Moreover, we will call $\varepsilon$-\gls{SNE} any collective strategy profile $\bs x^\star\in\Omega$ that satisfies, for all $i\in\mc I$, $\mathbb{J}_i(x^\star_i,\bs x^\star_{-i})\le\textrm{inf}_{y_i\in\R^{n_i}}~\mathbb{J}_i(y_i,\bs x^\star_{-i})+\varepsilon$, for some $\varepsilon>0$.
We now make some standard assumptions on the data of the \gls{SNEP} at hand \cite{franci2020distributed}:
\begin{standing}[\textup{Local cost function}]\label{standing:cost}
	The following conditions hold true for all $i\in\mc I$:
	\begin{enumerate}
		\item[(i)] The function $x_i\mapsto g_i(x_i)$ is lower semicontinuous and convex with nonempty, compact, and convex domain, $\mathrm{dom}(g_i)\subseteq\R^{n_i}$;
		\item[(ii)] For all $\bs x_{-i}\in\R^{n-n_{i}}$ and $\xi\in\Xi$, the function $x_i\mapsto J_i(x_i,\bs x_{-i},\xi)$ is convex, Lipschitz continuous, and continuously differentiable. For all $x_i\in\R^{n_i}$ and $\bs x_{-i}\in\R^{n-n_{i}}$, the function $\xi\mapsto J_i(x_i,\bs x_{-i},\xi)$ is measurable with integrable Lipschitz constant $\kappa_i(\bs x_{-i},\xi)>0$ \gls{wrt} $\xi$;
		\item[(iii)] For all $\bs x_{-i}\in\R^{n-n_{i}}$, $x_i\mapsto \expected{\prob{P}}{J_i(x_i,\bs x_{-i},\xi)}$ is convex and continuously differentiable. \hfill$\square$
	\end{enumerate}
	%\hfill$\square$
\end{standing}
The conditions postulated in Standing Assumption~\ref{standing:cost} guarantee the existence of an \gls{SNE}, while they are not sufficient to establish uniqueness \cite{ravat2011characterization}. Thus, the derivation of some iterative \gls{SNE} seeking algorithms able to compute an equilibrium is typically achieved by recasting the \gls{SNEP} in \eqref{eq:sNEP} as a zero-finding problem, i.e., by looking for some $\bs x^\star$ satisfying:
\begin{equation}\label{eq:KKT}
	\text{For all } i \in\mc I:\bs 0_{n_i}\in\expected{\prob{P}}{\nabla_{x_i} J_i(x^\star_i,\bs x^\star_{-i},\xi)}+\partial g_i(x^\star_i),
\end{equation}
where, in view of Standing Assumption~\ref{standing:cost}.(ii), we are entitled to exchange the expected value and the gradient. We recall that the inclusions above can be derived by imposing the \gls{KKT} conditions to each problem \eqref{eq:sNEP}.  By introducing $\mathbb{F}(\bs x)\eqdef\expected{\prob{P}}{F(\bs x,\xi)}$ with $F(\bs x,\xi)\eqdef\col((F_i(x_i,\bs x_{-i},\xi))_{i\in\mc I})$ and $F_i(x_i,\bs x_{-i},\xi)\eqdef\nabla_{x_i} J_i(x_i,\bs x_{-i},\xi)$, and 
%$g(\bs x)\eqdef\col((g_i(x_i))_{i\in\mc I})$, 
$\partial g(\bs x)\eqdef\col((\partial g_i(x_i))_{i\in\mc I})$,
conditions in \eqref{eq:KKT} can be represented in compact form by operators:
\[
%\begin{aligned}
	\mathcal{A}:
		\bs x\mapsto
		\partial g(\bs x), \; \mathcal{B}:
		\bs x \mapsto
		\mathbb{F}(\bs x).
%\end{aligned}
\] 
%\blue{where $\partial g(\bs x)\eqdef\col((\partial g_i(x_i))_{i\in\mc I})$.}
Note that the conditions in \eqref{eq:KKT} require the evaluation of the expected value $\expected{\prob{P}}{\cdot}$, a quantity that is impossible to compute in case the distribution $\prob{P}$ of the random variable $\xi$ is not available, as in our framework. Therefore, several data-driven approximation schemes for the evaluation of the pseudogradient mapping, i.e., the operator $\mc B$ above, have been proposed in the literature \cite{koshal2013,franci2020distributed,franci2021distributed,franci2021stochastic,fabiani2022stochastic}. 
However, we take a practical perspective by assuming that the agents have collectively available a finite set of \gls{iid} realizations for $\xi$ coming from, e.g., historical data, gathered into $\mc D_s$. This is particularly relevant for instance in autonomous driving \cite{palanisamy2020multi}, traffic coordination \cite{liu2023unified}, as well as smart grids \cite{xu2020multi} and demand response management \cite{motalleb2018real}. A key example from the latter domain will be analyzed in \S \ref{subsec:SNEP}.
%Under very general conditions on such dataset, we then establish finite-sample guarantees directly characterizing the distance between the output such algorithms produce and an \gls{SNE} itself.

%By introducing the mapping $G(\bs x)\eqdef\col((\partial g_i(x_i))_{i\in\mc I})$, 
We now report conditions on $\mathbb{F}(\cdot)$ ensuring, among the other, the uniqueness of the \gls{SNE} associated to the \gls{SNEP} in \eqref{eq:sNEP}:
\begin{standing}\label{standing:pseudogradient}
%	The following conditions hold true:
%	\begin{enumerate}
%		\item[(i)] 
		For all $\xi\in\Xi$, the mapping $\bs x\mapsto F(\bs x,\xi)$ is $\mu_F$-strongly monotone and $\kappa_F$-Lipschitz continuous, with $\mu_F\le\kappa_F$, and $\gamma\in(0,2\mu_F/\kappa_F^2)$. Given any $\xi\in\Xi$, there exists $M_F>0$ so that $\|F(\bs x,\xi)\|\le M_F$ for all $\bs x\in\R^n$.
		%\red{\item[(ii)] The mapping $\bs x\mapsto G(\bs x)$ is $\kappa_G$-Lipschitz continuous, with $\kappa_G>0$. There exists some $M_G>0$ so that $\|G(\bs x)\|\le M_G$ for all $\bs x\in\R^n$.}
%		\item[(iii)] The mapping $\bs x\mapsto \mathbb{F}(\bs x)$ is $\mu_F$-strongly monotone and $\kappa$-Lipschitz continuous, with $\kappa > \mu_F >0$.
%	\end{enumerate}
	\hfill$\square$
\end{standing}

\begin{algorithm}[t]
	\caption{Data-driven proximal gradient method}\label{alg:proximal}
	
	\smallskip
	\textbf{Initialization:} Samples $\mc D_s$, $x_i^0 \in\R^{n_i}$ for all $i\in\mc I$
	\smallskip
	
	\textbf{Iteration} $k\in\N_0$\textbf{:} Agent $i$ receives $\bs x_{-i}^k$, then updates
	\[
	x_i^{k+1}=\prox_{\gamma g_i}\left(x_i^k-\frac{\gamma}{s} \sum_{j=1}^s  F_i(x_i^k,\bs x_{-i}^k,\xi^{(j)})\right)
	\]
%	\blue{Barbara: why the algorithm environment? Either we do this for every algorithm or for none}
\end{algorithm}

Our analysis focuses on the distributed, proximal-like procedure reported in Algorithm~\ref{alg:proximal}, which can be derived by starting from the agent-wise fixed-point \gls{FB} iteration, i.e., for all $i\in\mc I$:
\[
	x_i^\star=(I_{n_i}-\gamma\partial g_i)^{-1}(x_i^\star+\gamma\expected{\prob{P}}{\nabla_{x_i} J_i(x^\star_i,\bs x^\star_{-i},\xi)}).
\]
From Standing Assumption \ref{standing:cost} and \cite[Prop.~16.44]{bauschke2017correction}, we have $(I_{n_i}-\gamma\partial g_i)^{-1}=\prox_{\gamma g_i}$, leading to the exact version of Algorithm~\ref{alg:proximal}. Similar to \eqref{eq:approxB}, the expected value can then be replaced through a data-driven approximation of the local operator $\expected{\prob{P}}{F_i(x_i,\bs x_{-i},\xi)}$ by averaging $F_i(\cdot,\cdot,\xi^{(j)})$ at every iteration over the available $s$-samples.
%[discussion on how one arrives at Algorithm~\ref{alg:proximal} without approximation, i.e., how to apply \gls{FB} to \eqref{eq:KKT}}. 
 Also in this case we tacitly assume each $F_i(\cdot,\xi)$ being an unbiased operator, i.e., for all $\bs x\in\R^n$, $\expected{\prob{P}}{F_i(\bs x,\xi)}=\mathbb{F}_i(\bs x)$, for all $i\in\mc I$. Moreover, to simplify the analysis, we have adopted the same, constant learning rate $\gamma>0$ for all the agents, albeit a generalization to local, possibly time-varying $\{\gamma_{k_i}\}_{i\in\mc I}$ is doable. A further generalization consists in considering local dataset $\mc D_{s^i}$. 

Under the conditions postulated in Standing Assumptions~\ref{standing:cost} and \ref{standing:pseudogradient}, which are widely employed in the algorithmic game theory literature for stochastic setting \cite{ravat2011characterization, franci2021stochastic}, we note that Algorithm~\ref{alg:proximal} exhibits almost sure convergence to the \gls{SNE} in case of an increasing batch size $s=s_k\to\infty$, along with few other assumptions \cite[Th.~1]{franci2020distributed}. In a data-limited context, however, our goal is instead to investigate how far we can get from the unique \gls{SNE} by running Algorithm~\ref{alg:proximal} for $K$ iterations. Specifically, we want to characterize, in a probabilistic sense, the resulting $\bs x^{K+1}$ as an $\varepsilon$-\gls{SNE} of the \gls{SNEP} in \eqref{eq:sNEP}.

%\blue{Barbara: based on these comments however we consider a SAA-like approach. We need to figure this out because SAA and SA are two different problems. I still think we should aim for the SA and indeed frame it as a \emph{finite-sample, deterministic take on SA}. It might be worth not mentioning the SAA in the body of the paper but leave it maybe to the intro to say that deterministic solution for stochastic programming exist but they still require infinite data, like SAA.} \tr{FF: OK!}

\subsection{Data-driven certificates for $\varepsilon$-\gls{SNE}}
First, we need to recast Algorithm~\ref{alg:proximal} into the framework of algorithmic stability by starting from its compact reformulation:
\begin{equation}\label{eq:proximal_compact}
	\bs x^{k+1}=\bs{\prox}_{\gamma g}\left(\bs x^k-\frac{\gamma}{s} \sum_{j=1}^s  F(\bs x^k,\xi^{(j)})\right),
\end{equation}
where, with some abuse of notation, $\bs{\prox}_{\gamma g}(\cdot)$ is meant to be applied agent-wise with $g_i$ in place of $g$. For the sake of our analysis, \eqref{eq:proximal_compact} can be equivalently rewritten by means of an additional variable $\bs y\in\R^n$ as follows:
\begin{equation}\label{eq:proximal_compact_augmented}
	\left\{
	\begin{aligned}
		&\bs y^k=\bs x^k-\frac{\gamma}{s} \sum_{j=1}^s  F(\bs x^k,\xi^{(j)})\\
		&\bs x^{k+1}=\bs{\prox}_{\gamma g}\left(\bs y^k\right).
	\end{aligned}
	\right.
\end{equation}
In fact, due to the properties of the proximity operator we~have: 
\[
\left\{
\begin{aligned}
	&\bs y^\star=\bs x^\star-\gamma \mathbb F(\bs x^\star)\\
	&\bs x^\star=\bs{\prox}_{\gamma g}\left(\bs y^\star\right)=\bs{\prox}_{\gamma g}\left(\bs x^\star-\gamma \mathbb F(\bs x^\star)\right),
\end{aligned}
\right.
\]
i.e., a fixed point of the dynamics in \eqref{eq:proximal_compact_augmented}, in case $\mathbb F(\cdot)$ was available and in place of the average $\frac{1}{s} \sum_{j=1}^s  F(\bs x^k,\xi^{(j)})$, equivalently produces an \gls{SNE} $\bs x^\star$ as subvector, thereby linking the augmented dynamics  \eqref{eq:proximal_compact_augmented} with the conditions in \eqref{eq:KKT}.
By making a parallelism with \S \ref{subsec:alg_stab}, starting from $\bs x^0\in\R^n$ we will let coincide with our hypothesis $H_s$ the output provided by iterating \eqref{eq:proximal_compact_augmented} $K$-times, i.e., $\bs \omega^K\eqdef\col(\bs y^K,\bs x^{K+1})\in\R^{2n}$.

%Our goal is now to relax the quite restrictive requirement on the mapping $G(\cdot)$ in Standing Assumption~\ref{standing:pseudogradient}.(ii). Besides aligning with the conditions usually imposed in the cognate literature, lifting the Lipschitz continuity of $G(\cdot)$ will then allow us to consider a broader family of functions $g_i$'s as, e.g., the indicator function $\iota_{\Omega_i}(\cdot)$.

The considerations above suggest us to choose the following function as a candidate loss associated to a hypothesis $H$:
\begin{equation}\label{eq:sneploss}
	\begin{aligned}
		\ell(H,\xi)&=\norm{[0_n\quad I_n] H-\gamma F([0_n\quad I_n] H,\xi)-[I_n\quad0_n]H}\\
		&=\norm{
			\bs x-\gamma F(\bs x,\xi)-\bs y}.
	\end{aligned}
\end{equation}
We can then establish what follows:
\begin{lemma}\label{lemma:unif_stab_3}
	With $\tau_F\eqdef\sqrt{1-\gamma(2\mu_F-\gamma\kappa_F^2)}<1$, Algorithm~\ref{alg:proximal} possesses $(2\gamma M_F(1+\tau_F)/(s(1-\tau_F)))$-uniform stability \gls{wrt} the loss function $\ell(H,\xi)$ in \eqref{eq:sneploss}.
	\hfill$\square$
\end{lemma}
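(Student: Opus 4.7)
The plan is to show that Lemma~\ref{lemma:unif_stab_3} follows as a direct specialization of Lemma~\ref{lemma:FB_unif_stab} once Algorithm~\ref{alg:proximal} is correctly identified as an instance of the abstract data-driven \gls{FB} scheme in Algorithm~\ref{alg:FBmod}. Specifically, I would set $\mc A \eqdef \partial g$, $\mc B \eqdef \mathbb F$, and take the oracle to be $\mc O(\bs x,\xi) \eqdef F(\bs x,\xi)$. Under Standing Assumption~\ref{standing:cost}.(i), each $g_i$ is proper, lower semicontinuous, and convex, so $\partial g$ is maximally monotone and its resolvent $(\Id+\gamma\partial g)^{-1}$ coincides with $\bs{\prox}_{\gamma g}$ by \cite[Prop.~16.44]{bauschke2017correction}. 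This identification makes the augmented dynamics \eqref{eq:proximal_compact_augmented} literally of the form \eqref{eq:FBmod}.

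Next, I would verify the remaining hypotheses of Lemma~\ref{lemma:FB_unif_stab} in this instantiation. Standing Assumption~\ref{standing:pseudogradient} provides, for every $\xi\in\Xi$, that $F(\cdot,\xi)$ is $\mu_F$-strongly monotone and $\kappa_F$-Lipschitz continuous with $\mu_F\le\kappa_F$ and the step size $\gamma\in(0,2\mu_F/\kappa_F^2)$, so Assumption~\ref{ass:SMON} is satisfied with $(\mu,\kappa)=(\mu_F,\kappa_F)$. Standing Assumption~\ref{standing:pseudogradient} also supplies the uniform bound $\|F(\bs x,\xi)\|\le M_F$ for all $\bs x\in\R^n$ and $\xi\in\Xi$, which yields, via unbiasedness and Jensen's inequality, $\|\mathbb F(\bs x)\|\le M_F$, i.e., the analogue of Standing Assumption~\ref{standing:boundedness} with constant $M_F$. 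Finally, the loss \eqref{eq:sneploss} is precisely the template \eqref{eq:FBloss} applied to the oracle $F(\cdot,\xi)$.

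With these correspondences in place, invoking Lemma~\ref{lemma:FB_unif_stab} verbatim, with $\tau$ replaced by $\tau_F \eqdef \sqrt{1-\gamma(2\mu_F-\gamma\kappa_F^2)}<1$ and $M$ replaced by $M_F$, yields uniform stability of Algorithm~\ref{alg:proximal} with constant
\[
\beta(s)=\frac{2\gamma M_F(1+\tau_F)}{s(1-\tau_F)},
\]
which is exactly the claim.

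I do not expect any genuine obstacle: the work is essentially bookkeeping to check that the pseudogradient setting fits the abstract template. The only point requiring a moment of care is ensuring that the uniform bound $M_F$ indeed holds sample-wise (not only in expectation), so that the growth-lemma argument inside the proof of Lemma~\ref{lemma:FB_unif_stab}, which relies on bounding the single-sample perturbation $\|\tfrac{\gamma}{s} F(\bs x^k,\xi^{(i)})\|$ by $\gamma M_F/s$, carries over unchanged. Since Standing Assumption~\ref{standing:pseudogradient} states the bound pointwise in $\xi$, this step is immediate.
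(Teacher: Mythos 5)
Your proposal is correct and matches the paper's own proof, which likewise reduces Lemma~\ref{lemma:unif_stab_3} to Lemma~\ref{lemma:FB_unif_stab} by observing that Standing Assumptions~\ref{standing:cost} and~\ref{standing:pseudogradient} let the augmented dynamics \eqref{eq:proximal_compact_augmented} satisfy Assumption~\ref{ass:SMON} with $(\mu,\kappa,M)=(\mu_F,\kappa_F,M_F)$. Your write-up simply spells out the bookkeeping that the paper leaves implicit.
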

\begin{proof}
	The proof follows exactly the same steps as per the proof of Lemma~\ref{lemma:FB_unif_stab}, once noted that Standing Assumptions~\ref{standing:cost} and \ref{standing:pseudogradient} together allow \eqref{eq:proximal_compact_augmented} to satisfy the conditions postulated in Assumption~\ref{ass:SMON}.
\end{proof}
As a main consequence of the uniform stability exhibited by Algorithm~\ref{alg:proximal}, we obtain the following key result:
\begin{theorem}\label{th:sample_complexity_snep}
	Fix $s\ge1$ and $\delta\in(0,1)$. Given any dataset $\mc D_s\in\Xi^s$, there exists $\varepsilon=\varepsilon(s,\delta)>0$ such that $\bs x^{K+1}$ is an $\varepsilon$-\gls{SNE} of the \gls{SNEP} in \eqref{eq:sNEP} with probability at least $1-\delta$, where $\bs x^{K+1}$ is obtained by iterating Algorithm~\ref{alg:proximal} $K$-times.
	\hfill$\square$
\end{theorem}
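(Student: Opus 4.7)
The plan is to mirror the argument of Theorem~\ref{th:FB_sample_complexity}, since the augmented dynamics in \eqref{eq:proximal_compact_augmented} is structurally identical to the FB iteration \eqref{eq:FBmod} with $\mathcal{A}=\partial g$, $\mathcal{B}=\mathbb{F}$, and noisy oracle $\mc O(\bs x,\xi)=F(\bs x,\xi)$, which is unbiased for $\mathbb{F}$ and inherits the strong monotonicity, Lipschitz continuity and boundedness conditions of Standing Assumption~\ref{standing:pseudogradient} from its defining assumptions. First, using the uniform stability constant provided by Lemma~\ref{lemma:unif_stab_3} and the replacement-based bound \eqref{eq:exp_bound_replacement}, I would replicate the chain of inequalities in the proof of Theorem~\ref{th:FB_sample_complexity}: bound $\|\bs{\prox}_{\gamma g}(\bs x^{K+1}-\gamma\mathbb{F}(\bs x^{K+1}))-\bs x^{K+1}\|$ by $\expected{\prob{P}}{\ell(H_s,\xi)}=r(A,s)$ by virtue of the unbiased oracle, firm nonexpansiveness of $\bs{\prox}_{\gamma g}(\cdot)$, and Jensen's inequality, and then apply \eqref{eq:exp_bound_replacement} to obtain, with probability at least $1-\delta$, a residual bound of the form $\epsilon/\gamma$ with $\epsilon$ explicit in $s,\delta,K$.

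This yields that, with confidence $1-\delta$, there exists $\bs z\in\partial g(\bs x^{K+1})$ with $\|\bs z+\mathbb{F}(\bs x^{K+1})\|\le \varepsilon$, i.e., $\bs x^{K+1}\in\zer_\varepsilon(\mc A+\mc B)$ in the sense of \eqref{eq:eps_zero}. The remaining step is to convert the $\varepsilon$-zero property into an $\varepsilon'$-\gls{SNE} guarantee. To this end I would block-decompose the residual as $\bs z+\mathbb{F}(\bs x^{K+1})=\col((\varepsilon_i)_{i\in\mc I})$ with $\|\varepsilon_i\|\le\varepsilon$ for every $i$, and observe that under Standing Assumption~\ref{standing:cost} each mapping $x_i\mapsto \mathbb{J}_i(x_i,\bs x^{K+1}_{-i})$ is convex with subdifferential $\mathbb{F}_i(\cdot,\bs x^{K+1}_{-i})+\partial g_i(\cdot)$, so $\varepsilon_i\in\partial_{x_i}\mathbb{J}_i(x_i^{K+1},\bs x^{K+1}_{-i})$. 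The convex subgradient inequality then yields, for every $y_i\in\dom(g_i)$,
\[
\mathbb{J}_i(x_i^{K+1},\bs x^{K+1}_{-i})\le\mathbb{J}_i(y_i,\bs x^{K+1}_{-i})+\|\varepsilon_i\|\,\|y_i-x_i^{K+1}\|.
\]
Since $\dom(g_i)$ is compact by Standing Assumption~\ref{standing:cost}.(i), its diameter $D_i$ is finite; denoting $D\eqdef\max_{i\in\mc I} D_i$, we obtain the $\varepsilon'$-\gls{SNE} property with $\varepsilon'\eqdef D\,\varepsilon$, which closes the statement upon redefining the radius accordingly.

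The main obstacle is the translation from a fixed-point residual to a primal suboptimality gap: the bound \eqref{eq:exp_bound_replacement} naturally controls the norm $\|\bs z+\mathbb{F}(\bs x^{K+1})\|$, not the excess cost $\mathbb{J}_i(x_i^{K+1},\bs x^{K+1}_{-i})-\inf_{y_i}\mathbb{J}_i(y_i,\bs x^{K+1}_{-i})$, and closing this gap without further regularity (such as strong convexity of each $\mathbb{J}_i$) forces reliance on the compactness of $\dom(g_i)$ and inflates the final radius by a diameter-dependent factor. A secondary subtlety is that Lemma~\ref{lemma:unif_stab_3} is phrased for the joint variable $\bs\omega=\col(\bs y,\bs x)$, so care must be taken that the subvector $\bs x^{K+1}$ inherits the probabilistic certificate; this follows because the loss \eqref{eq:sneploss} depends on $H$ only through the pair $(\bs x,\bs y)$ actually produced by the algorithm, and the coupling $\bs y^K=\bs x^K-\tfrac{\gamma}{s}\sum_j F(\bs x^K,\xi^{(j)})$ reduces the analysis to $\bs x^{K+1}$.
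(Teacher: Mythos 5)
Your proposal is correct and, for its core, follows the same route as the paper: the paper's proof of this theorem is a one-liner stating that one measures the gap $\|\bs{\prox}_{\gamma g}(\bs x^{K+1}-\gamma\mathbb F(\bs x^{K+1}))-\bs x^{K+1}\|$ and then mimics the proof of Theorem~\ref{th:FB_sample_complexity}, using the stability constant of Lemma~\ref{lemma:unif_stab_3}, the loss \eqref{eq:sneploss}, unbiasedness, firm nonexpansiveness of $\bs{\prox}_{\gamma g}$, Jensen's inequality, and the bound \eqref{eq:exp_bound_replacement} --- exactly your first paragraph.

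Where you genuinely add something is the final conversion step. The paper's proof stops at bounding the fixed-point residual (equivalently, the $\varepsilon$-zero property in the sense of \eqref{eq:eps_zero}), and implicitly identifies this with the cost-based $\varepsilon$-\gls{SNE} notion of Definition~\ref{def:SNE}, which measures excess cost $\mathbb{J}_i(x_i^{K+1},\bs x_{-i}^{K+1})-\inf_{y_i}\mathbb{J}_i(y_i,\bs x_{-i}^{K+1})$ rather than a residual norm. Your block decomposition $\bs z+\mathbb F(\bs x^{K+1})=\col((\varepsilon_i)_{i\in\mc I})$, the observation that $\varepsilon_i\in\partial_{x_i}\mathbb J_i(x_i^{K+1},\bs x_{-i}^{K+1})$, and the subgradient inequality combined with compactness of $\dom(g_i)$ correctly close this gap, at the price of a diameter factor $D$ that is harmlessly absorbed into $\varepsilon$ since the theorem only asserts existence of some radius. (Minor point: you need $x_i^{K+1}\in\dom(g_i)$ for the diameter bound, which holds because it is the output of $\prox_{\gamma g_i}$, and the inequality is vacuous for $y_i\notin\dom(g_i)$.) So your argument is not a different route but a more complete version of the paper's, making explicit a translation the paper leaves to the reader.
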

\begin{proof}
	Once observed that measuring the gap between consecutive iterations on the decision variable $\bs x$ through $\|\bs{\prox}_{\gamma g}\left(\bs x^{k+1}-\gamma \mathbb F(\bs x^{k+1})\right)-\bs x^{k+1}\|$ provides an equivalent information on the distance to a fixed point of the dynamics $\bs x^{k+1}=\bs{\prox}_{\gamma g}\left(\bs x^k-\gamma \mathbb F(\bs x^k)\right)$, the rest of the proof mimics the steps performed for deriving Theorem~\ref{th:FB_sample_complexity}, thus ending up to upper bounding $\|\bs{\prox}_{\gamma g}\left(\bs x^{K+1}-\gamma \mathbb F(\bs x^{K+1})\right)-\bs x^{K+1}\|$ with the risk associated to \eqref{eq:sneploss}.
\end{proof}

Theorem~\ref{th:sample_complexity_snep} then offers a finite-sample probabilistic certificate, in the form of a distance $\varepsilon$ from the unique \gls{SNE}, characterizing the output of Algorithm~\ref{alg:proximal} obtained after arbitrary $K$ iterations. This is crucial in real-world applications where available dataset may even be large, yet limited, and whose samples are only required to be \gls{iid}, with no restrictions on the probability $\prob{P}$ according to which they are distributed.

\begin{remark}
	The uniform stability established in Lemma~\ref{lemma:FB_unif_stab_general} offers an opportunity to extend our bounds for $\varepsilon$-\gls{SNE} to \glspl{SGNEP}.
%	 in Theorem \ref{th:sample_complexity_snep} for \gls{SNEP}s, cannot be extended to \emph{generalized} \gls{SNEP}s (\gls{SGNEP}s), but the ones presented in Section \ref{sec:coco} can. In \gls{SGNEP}s, 
	 Here, the agents aim at solving mutually coupled optimization problems with shared constraints $h:\R^n\to\R^m$, i.e.,
	\[
	\forall i \in \mc I:
%	\left\{ \begin{aligned}
	\underset{x_i \in \Omega_i}{\mathrm{min}}~\mathbb{J}_i\left(x_i, \bs{x}_{-i}\right)~\mathrm{ s.t. }~h(x_i,\bs x_{-i})\leq0.
%	\end{aligned} \right.
	\]
Also in this case, an equilibrium can be seen as a zero of the sum of the following two operators \cite{yi2019operator,belgioioso2018projected,franci2021stochastic}:
\[
\begin{aligned}
& \mathcal{A}:\begin{bmatrix}
\bs x \\
\lambda
\end{bmatrix}\mapsto\begin{bmatrix}
\partial g(\bs x) \\
\mathrm N_{\mathbb{R}_{\geq 0}^m}(\lambda)
\end{bmatrix}+\begin{bmatrix}
\nabla h(\bs x)^\top \lambda \\
-h(\bs x)
\end{bmatrix}, \ \mathcal{B}:\begin{bmatrix}
\bs x \\
\lambda
\end{bmatrix} \mapsto\begin{bmatrix}
\mathbb{F}(\bs x) \\
0
\end{bmatrix},
\end{aligned}\] 
where, typically, the (extended) mapping $\mc B$ is cocoercive \cite[Lemma~1]{belgioioso2018projected}.
% \cite[Lemma~7]{yi2019operator},\cite[Lemma~2]{franci2020distributed}. 
Following similar arguments as in Lemma~\ref{lemma:FB_unif_stab_general}, we can then prove $K$-dependent uniform stability of a tailored data-driven \gls{FB} for \glspl{SGNEP} and establish a bound analogous to the one in Theorem~\ref{th:FB_sample_complexity_general}.
	\hfill$\square$
\end{remark}
%%%%%%%%%%%%%%%%%%%%%%%%%%%%%%%%%%%%%%%%%%%%
\section{Numerical experiments}\label{sec:simulations}
We now verify our theoretical bounds on several numerical examples. All simulations are run in MATLAB on a laptop with an Apple M2 chip featuring an 8-core CPU and 16 GB~RAM.

\subsection{Plug-in electric vehicles charging coordination}\label{subsec:SNEP}
To test the bound established in Theorem~\ref{th:sample_complexity_snep}, a particular instance of that in Theorem~\ref{th:FB_sample_complexity}, we adopt a stochastic version of a classic charging coordination problems for \glspl{PEV}, sketched next for completeness.
Specifically, we consider a set of $N$ \glspl{PEV} populating a distribution grid, indexed by $\mc I=\{1,\ldots,N\}$, where each agent aims to determine a day-ahead charging schedule subject to a stochastic cost of electricity \cite{liu2015bidding} and few other fees. To this end, each \gls{PEV} directly controls variable $x_i\in\Omega_i\subseteq\R^T$ corresponding to the energy injection over a discrete interval $\mc T=\{1,\ldots,T\}$. Each user then aims at minimizing a private cost in the form: 
\begin{equation}\label{eq:SNEP_local_cost}
	J_i(x_i,\sigma(\bs x)) = \|x_i\|^2_{Q_i}+c_i^\top x_i + \xi^\top x_i  + \|\sigma(\bs x) - \bar{\sigma} \|^2_P,
\end{equation} 
where $\|x_i\|^2_{Q_i}+c_i^\top x_i$ models the $i$-th battery degradation cost, with $0\prec Q_i\in\R^{T\times T}$ and $c_i\in\R^T$, while $\xi\in\R^{T}$ denotes the stochastic day-ahead price of energy, supported over some $\Xi\subset\R^{T}$ with unknown probability distribution $\prob{P}$. Moreover, $\sigma(\bs x)$ represents the aggregate demand of the overall population of \glspl{PEV}, usually defined as $\sigma(\bs x) = \tfrac{1}{N} \sum_{i=1}^{N} x_i \in \Omega$, $\Omega=\prod_{i\in\mc I}\Omega_i$, whose deviation from some reference signal $\bar{\sigma}\in\mathbb{R}^T$ is penalized through $0\prec P \in \R^{T\times T}$. To complete the definition of our \gls{SNEP}, we let
%assume each local set $\Omega_i$ takes the form:
$
	\Omega_i=\set{x_i\in[0, \bar x_i]^T}{\bsone_T^\top x_i \ge \zeta_i}.
$
While each $\bar x_i\ge0$ limits the nonnegative power injection at every interval, $\zeta_i \geq0$ forces a minimum level of charging amount over the time horizon $\mc T$ for the $i$-th user satisfaction.

\begin{table}[tb]
	\caption{Simulation parameters -- \S \ref{subsec:SNEP}}
	\label{tab:sim_val}
	\centering
	\begin{tabular}{lll}
		\toprule
		Param.  & Description   & Value \\
		\midrule
		$T$ & Time interval & $14$\\
		$N$ & Number of \glspl{PEV} & $20$\\
		$Q_i$ & Weight matrix - Battery degradation & $\sim \mc U(0.002,0.008) \cdot I_T$\\
		$c_i$ & Weight vector - Battery degradation & $\sim \mc U(0.02,0.075) \cdot \bsone_T$\\
		$P$ & Weight matrix -- Deviation from $\bar\sigma$ & $I_{168}$\\
		$\bar\sigma$ & Aggregate reference signal & $\bsone_T$\\
		$\bar x_i$ & Power injection cap & $2.5$\\
		$\zeta_i$ & Minimum charging amount & $\sim \mc U(12,18)$\\
		$\mu_F$&Strong monotonicity constant&$0.0127$\\
		$\kappa_F$&Lipschitz constant&$0.1159$\\
		$M_F$&Upper bound -- $\|F(\bs x,\xi)\|$&$39.2192$\\
		\bottomrule
	\end{tabular}
\end{table}

Our numerical analysis is conducted with the values reported in Tab.~\ref{tab:sim_val}, where $\mu_F$ and $\kappa_F$ have been obtained analytically in view if the quadratic structure of the \gls{SNEP} at hand, while $M_F$ has been computed numerically. 
For the data-driven representation of the variable $\xi$ affecting each cost in \eqref{eq:SNEP_local_cost}, we have collected from \cite{entsoe} ten years of day-ahead energy prices (in \euro/kWh) with granularity of one hour. In particular, the price data refer to the period January 5, 2015--December 31, 2024, i.e., since the platform \cite{entsoe} was launched, for a total of $3649$ samples. As common in these type of coordination problems in which the flexible \glspl{PEV} demand is usually analyzed during off-peak periods, we have focused on the interval 0:00am--1:00pm.

For comparison purposes only, we compute the unique \gls{SNE} $\bs x^\star$ through a simplified version of \cite[Alg.~1]{franci2020distributed}. Besides turning each proximal operator $\prox_{\gamma g_i}(\cdot)$ into a projection mapping $\proj_{\Omega_i}(\cdot)$---the same happens to Algorithm~\ref{alg:proximal} indeed---at every iteration, \cite[Alg.~1]{franci2020distributed} relies on $10 k$ \gls{iid} samples drawn from a surrogate representation of $\Xi$ obtained by simply taking the minimum and maximum value over $\mc T$ from the pool of $3649$ available realizations. Note that, in the practical setting considered, the need of leveraging a limited dataset clashes with the theoretical conditions for exact convergence to the \gls{SNE} required by \cite[Alg.~1]{franci2020distributed}, which under our choice of selecting $10$ \gls{iid} samples per step would run for at most $364$ iterations, thereby motivating our quest for finite sample guarantees.

\begin{figure}
	\centering
	\ifTwoColumn
		\includegraphics[width=\columnwidth]{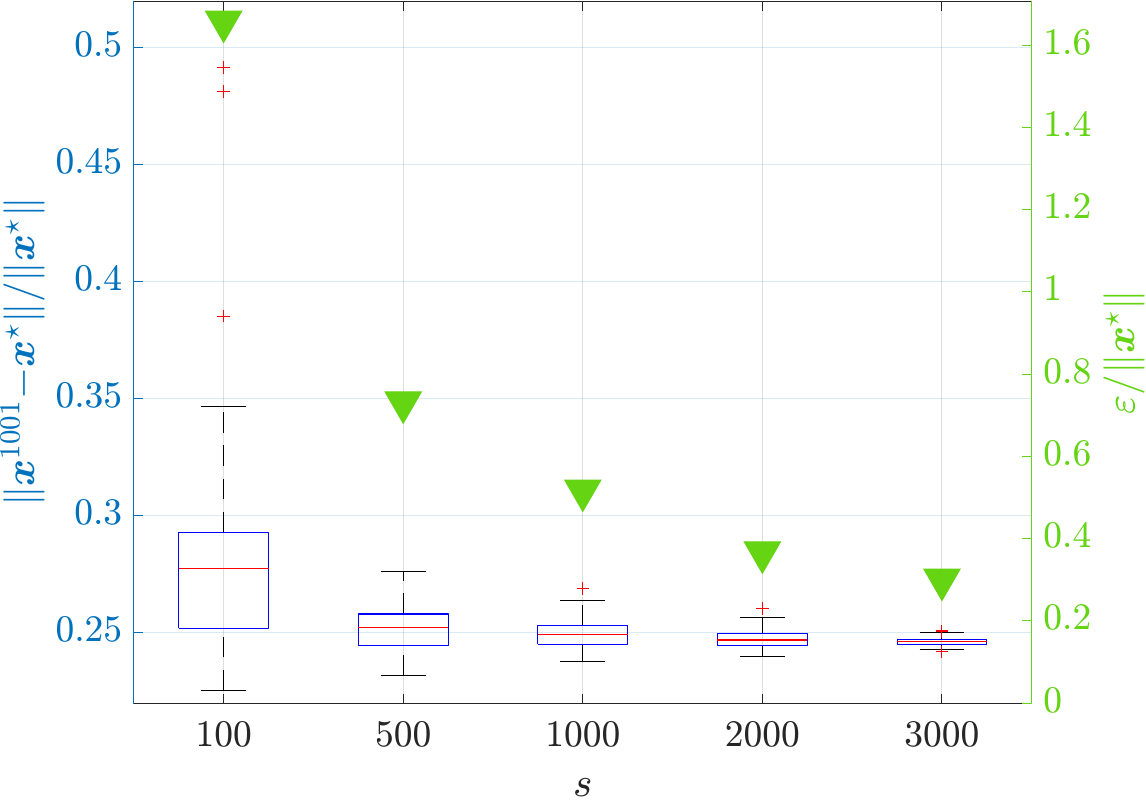}
	\else
		\includegraphics[width=.6\columnwidth]{testS_fixedK_t50}
	\fi
	\caption{Left y-axis: Box plots capturing the relative approximation error $\|\bs x^{1001}-\bs x^\star\|$ produced by $K=1000$ iterations of Algorithm~\ref{alg:proximal} over $50$ trials considering different dataset size. Right y-axis: The resulting averaged bounds $\varepsilon$ (green down-pointing triangles) offered by Theorem~\ref{th:sample_complexity_snep} with $\delta=0.05$.}
	\label{fig:testS_fixedK_t50}
\end{figure}
\begin{figure}
	\centering
	\ifTwoColumn
		\includegraphics[width=\columnwidth]{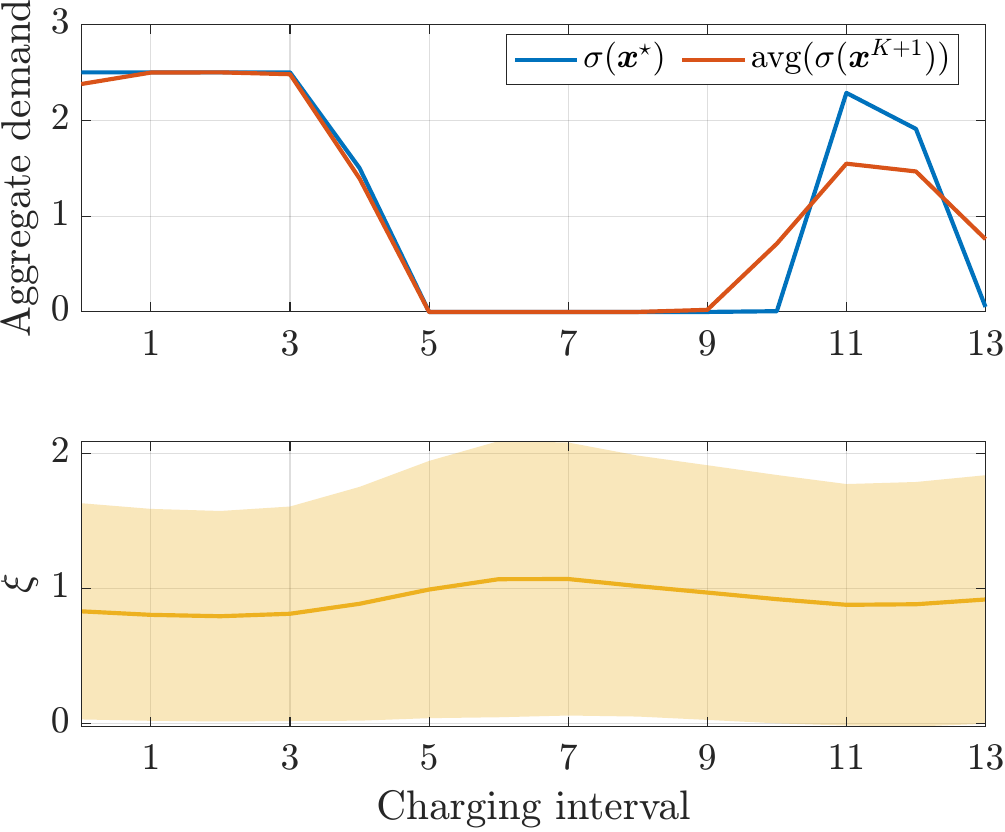}
	\else
		\includegraphics[width=.6\columnwidth]{testS_fixedK_aggdemand}
	\fi
	\caption{Top: Aggregate demand $\sigma(\bs x^\star)$ (blue line) and the approximated one related to $\bs x^{1001}$ obtained by running Algorithm~\ref{alg:proximal} with $s=3000$, averaged over $50$ trials (red line). Bottom: Mean value (yellow line) and standard deviation (yellow shaded area) of the energy price $\xi$ over $\mc T$.}     
	\label{fig:testS_fixedK_aggdemand}
\end{figure}

First, we fix the number of iterations $K=1000$ and consider $s\in\{100, 500, 1000, 2000, 3000\}$ to compare the theoretical bound $\varepsilon$ provided by Theorem~\ref{th:sample_complexity_snep} with the actual distance $\|\bs x^{1001}-\bs x^\star\|$, both terms scaled by $\|\bs x^\star\|$. Specifically, we perform $50$ trials for each of the $s$ samples drawn from the pool of available $3649$. In Fig.~\ref{fig:testS_fixedK_t50}, where the box plots refer to the left y-axis, while the green down-pointing triangles to the right one, we report the numerical results obtained with confidence level $\delta=0.05$ and learning rate $\gamma=0.02$, which produces $\bar\ell=24.3852$. Although the performance in terms of \gls{SNE} approximation is comparable, running Algorithm~\ref{alg:proximal} with a small number of samples produces widely spread results with several outliers. However, this effect diminishes as the dataset size increases. This trend is consistently reflected in the theoretical bound $\varepsilon$. Specifically, while the average value of $\varepsilon(100,0.05)$ over $50$ trials appears loose, increasing the dataset size results in tighter bounds. In particular, we can claim that the solution produced by Algorithm~\ref{alg:proximal}, run for $K=1000$ iterations with $3000$ samples, produces an error in the approximation of $\bs x^\star$ of at most $30\%$ with probability of at least $95\%$. The top plot in Fig.~\ref{fig:testS_fixedK_aggdemand} quantifies such a statement by comparing the aggregate \glspl{PEV} demand at the \gls{SNE} and its approximation, averaged over the $50$ trials, while the bottom plot reports the mean and standard deviation of the stochastic energy price. 

\begin{figure}
	\centering
	\ifTwoColumn
		\includegraphics[width=\columnwidth]{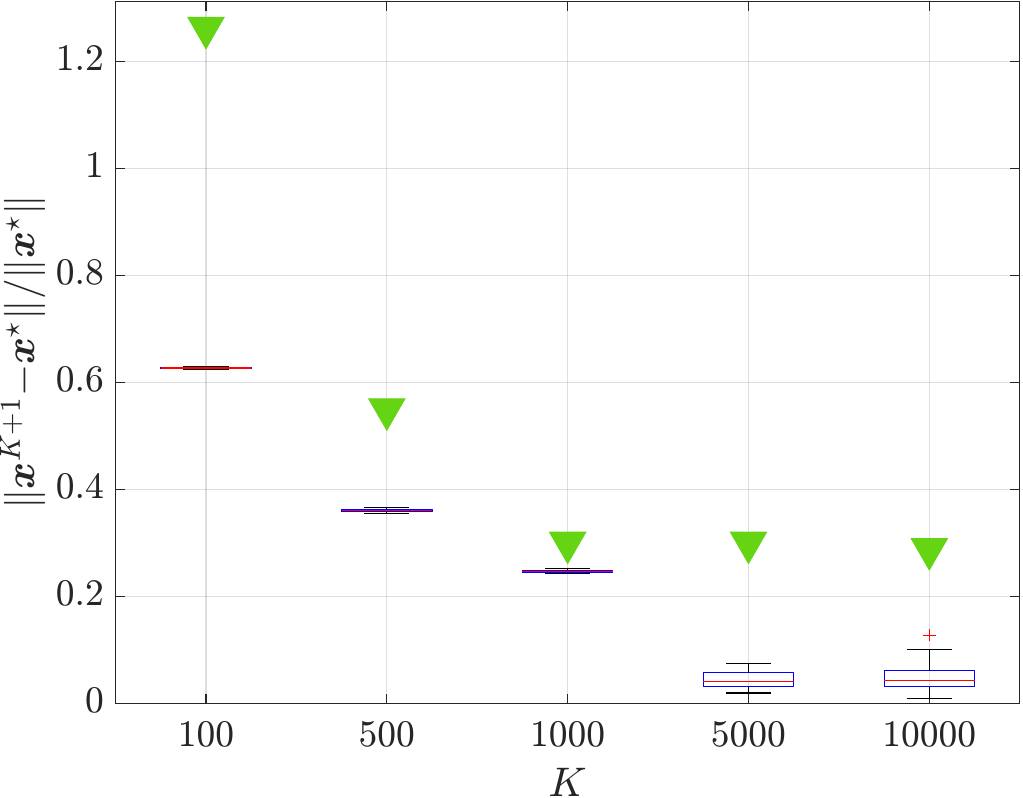}
	\else
		\includegraphics[width=.6\columnwidth]{testK_fixedS_t50}
	\fi
	\caption{Box plots capturing the relative approximation error $\|\bs x^{K+1}-\bs x^\star\|$ produced by different run $K$ of Algorithm~\ref{alg:proximal} over $50$ trials with fixed number of samples. The green down-pointing triangles denote the resulting averaged bounds $\varepsilon$ offered by Theorem~\ref{th:sample_complexity_snep} with $\delta=0.05$.}
	\label{fig:testK_fixedS_t50}
\end{figure}
\begin{figure}
	\centering
	\ifTwoColumn
		\includegraphics[width=\columnwidth]{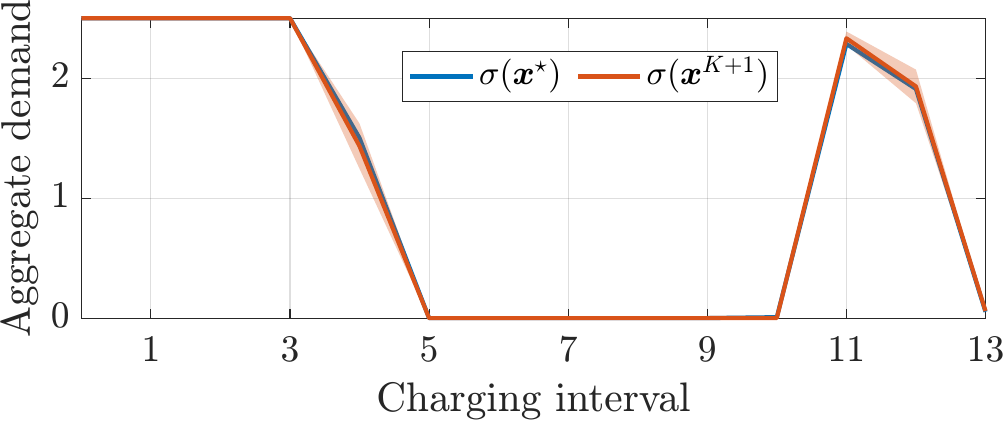}
	\else
		\includegraphics[width=.6\columnwidth]{testK_fixedS_aggdemand}
	\fi
	\caption{Aggregate demand $\sigma(\bs x^\star)$ (blue line) and its approximation through $\bs x^{5001}$, obtained by running Algorithm~\ref{alg:proximal} with $s=3000$. The solid red line denotes the mean of $\sigma(\bs x^{5001})$, while the shaded red area the associated standard deviation.}     
	\label{fig:testK_fixedS_aggdemand}
\end{figure}

Successively, we investigate the behaviour of Algorithm~\ref{alg:proximal} for a fixed number of samples $s=3000$, $\delta=0.05$, $\gamma=0.02$, and varying $K\in\{100, 500, 1000, 5000, 10000\}$. Quite interestingly, from Fig.~\ref{fig:testK_fixedS_t50} we observe that the theoretical bound established in Theorem~\ref{th:sample_complexity_snep} reveals an intrinsic dependency on $K$ itself. In fact, considering a limited number of iterations produces an averaged bound that is substantially ``dominated'' by the empirical error $\sum_{j=1}^{s}\norm{\bs x^{K+1}-\gamma F(\bs x^{K+1},\xi^{(j)})-\bs y^K}$, which progressively reduces its impact with larger values of $K$. In addition, note that with $K\in\{5000,10000\}$ one obtains a slight dispersion related to the relative approximation error computed---the resulting aggregate behaviour is compared in Fig.~\ref{fig:testK_fixedS_aggdemand}. This numerical evidence aligns with the necessity of imposing conditions on the approximation error variance for exact \gls{SNE} computation---see, for instance, \cite[Ass.~2(f), Ass.~3(d)]{koshal2013}, and \cite[Ass.~9, Prop.~1, Ass.~19]{franci2021stochastic}.
%We finally emphasize that the proposed certificates are not expected to be tight. The gap between the theoretical bound and the empirical evaluation observed in Fig.~\ref{fig:testS_fixedK_t50} and \ref{fig:testK_fixedS_t50} is indeed quite common for these type of probabilistic guarantees holding with high-confidence. Suitable examples can be found in, e.g., robust control design \cite{schildbach2014scenario}, or robustness evaluation \cite{fabiani2020robustness,fabiani2022probabilistic}.
\begin{remark}
		Our certificates are not expected to be tight. Probabilistic bounds based on stability arguments, indeed, work under weak assumptions but tend to be conservative due to their worst-case nature. Note that the gap observed in Fig.~\ref{fig:testS_fixedK_t50} and \ref{fig:testK_fixedS_t50} is comparable to the one obtained with similar probabilistic bounds \cite{schildbach2014scenario,fabiani2020robustness,fabiani2022probabilistic}. Nevertheless, sharpening strategies based on data-dependent bounds \cite{shalev2010learnability,kuzborskij2018data} or sensitivity refinements \cite{deng2021toward} can be likewise adopted.
		\hfill$\square$
\end{remark}

\subsection{An academic example}\label{subsec:academic_example}
We now test the bound obtained in \S \ref{sec:coco}, explicit function of the iteration index $K$, on a generic optimization problem which also represent a typical example for \glspl{VI} \cite{facchinei2003finite}, thus naturally fitting our operator splitting setup. Let us then consider the following quadratic optimization problem:
\[
\underset{x\in\mc X}{\textrm{min}}~q^\top x+\frac{1}{2}x^\top Px,
\]
with $\mc X\subseteq\R^n$. The data vectors/matrices are $q\in\R^n$ and $P\in\R^{n\times n}$, $P\succcurlyeq0$. Similar to \eqref{eq:KKT}, the problem of finding an optimizer $x^\star$ for the above can be rewritten as:
\[
	\bs 0_{n}\in \underbrace{q+\bar Px}_{\reqdef\mc B(x)}+\underbrace{\mathrm N_{\mc X}(x)}_{\reqdef\mc A(x)},
\]
where $\bar P$ denotes the symmetric part of $P$, i.e., $\bar P\eqdef\frac{1}{2}(P+P^\top)\in\bbS^n_{\succcurlyeq 0}$. In view of \cite[Cor.~18.16]{bauschke2017correction}, note that the operator $\mc B$ is $(1/\lambda_{\textrm{max}}(\bar P))$-cocoercive, and hence $\gamma\in(0,2/\lambda_{\textrm{max}}(\bar P))$. 

To exemplify the case in which the entries of the matrix $M$ are uncertain, we generate perturbed random matrices $\mc D_s=\{\bar P^{(1)},\ldots,\bar P^{(s)}\}$, $\bar P^{(i)}\eqdef\bar P+\diag(\xi^{(i)})\in\bbS^n$ for all $i\in\{1,\ldots,s\}$, run Algorithm~\ref{alg:FBmod} and verify the result established in Theorem~\ref{th:FB_sample_complexity_general} numerically. Specifically, we consider $n=10$, and generate $\bar P$ as $\bar P=Q\Lambda Q^\top$, where $Q\in\R^{10\times10}$ denotes the matrix of the QR decomposition of a random matrix with normally distributed entries, while the diagonal $\Lambda\in\R^{10\times10}$ has elements linearly spaced in $[0, 1]$. This yields a cocoercivity constant $1/\lambda_{\textrm{max}}(Q\Lambda Q^\top)=1$. Note that, although symmetric with each element of the random parameter $\xi$ sampled as $\mc N(0,0.5)$, not all the produced samples $\bar P^{(i)}$ are guaranteed to be positive semidefinite. In addition, each element of $q$ is chosen at random according to $\mc N(0,0.5)$, along with the constraint set $\mc X=[0,a]^{10}$, with $a\sim \mc U(0,2)$.
By considering $\delta=0.05$, all the other parameters involved in Theorem~\ref{th:FB_sample_complexity_general} and not explicitly mentioned have been estimated numerically.

\begin{table}[tb]
	\caption{Comparison results--Varying $K$}
	\label{tab:results_varyingK}
	\centering
	\ifTwoColumn
		\begin{tabular}{cccccc}
	%		\toprule
			$K$ & $100$ & $500$ & $1000$ & $5000$ & $10000$\\
			\midrule
			$\textrm{avg}(\Delta x_K)\!\times\!10^{-3}$ & $443.8$ & $18.2$ &   $1.1$  &  $1.3$  &  $1.4$\\
%			&&&&&\\
			$\textrm{avg}(\varepsilon)$ & $0.49$  &  $0.85$  &  $1.29$ &   $2.37$ &   $3.67$\\
			\bottomrule
		\end{tabular}
	\else
		\begin{tabular}{cccccc}
			%		\toprule
			$K$ & $100$ & $500$ & $1000$ & $5000$ & $10000$\\
			\midrule
			$\textrm{avg}(\Delta x_K)\!\times\!10^{-3}$ & $443.8$ & $18.2$ &   $1.1$  &  $1.3$  &  $1.4$\\
			$\textrm{avg}(\varepsilon)$ & $0.49$  &  $0.85$  &  $1.29$ &   $2.37$ &   $3.67$\\
			\bottomrule
		\end{tabular}
	\fi
\end{table}
In this case we are interested in investigating how the dependency on $K$ affects the underlying bound. To this end, we fix the number of samples $s=10000$ and vary $K\in\{100, 500, 1000, 5000, 10000\}$. The numerical results, averaged over $50$ different trials with learning rate $\gamma=0.01$, are reported in Tab.~\ref{tab:results_varyingK}, where $\Delta x_K\eqdef\|x^{K+1}-x^\star\|$ and some reference $x^\star$ is computed by using Gurobi \cite{gurobi}. As expected, the dependence on $K$ in the stability bound from Lemma~\ref{lemma:FB_unif_stab_general} results in rather loose certificates when $K$ is large. Therefore, when the operator $\mc B$ is merely cocoercive, it seems preferable to run Algorithm~\ref{alg:FBmod} for only a few iterations, e.g., $K\in[500,1000]$. Based on our numerical experience, this approach yields a solution $x^{K+1}$ that is empirically close to some $x^\star$, and it is accompanied by a tighter probabilistic certificate. This observation is consistent to what already pointed out in \cite{hardt2016train}.

%%%%%%%%%%%%%%%%%%%%%%%%%%%%%%%%%%%%%%%%%%%%
\section{Conclusion and outlook}\label{sec:conclusion}
By focusing on the problem of finding a zero of the sum of two operators, where one is either unavailable in closed form or computationally expensive to evaluate, we have derived rigorous certificates on the quality of solutions produced by data-based \gls{FB} operator splitting schemes. As frequently happens in stochastic optimization, we have indeed approximated such an expensive operator by means of a finite number of samples. Since exact convergence to a zero should not be expected in this limited information setting, we have established probabilistic bounds on the distance between a true zero and the \gls{FB} output, without making specific assumptions about the underlying data distribution. This has been made possible through a careful design of a tailored loss function representative for our purposes. We have then proved uniform stability of the data-driven \gls{FB} \gls{wrt} such a loss function under different monotonicity assumptions on the operators involved. Once derived computable expressions for an $\varepsilon$-zero that hold true with high confidence, we have finally applied our results to a popular \gls{SNE} seeking algorithm based on the \gls{FB} scheme.

Future work will analyze the stability properties for other operator splitting methods, e.g., Douglas-Rachford \cite[Ch.~25]{bauschke2017correction}, as well as possible randomized variants \cite{hardt2016train} of the resulting data-driven schemes. Aiming at improving our probabilistic bounds, one can also investigate the problem considered in this paper under different lenses, leveraging tools from \gls{PAC}-value iteration or oracle-based complexity measures. Finally, along the line of \cite{mohri2010stability,zhang2019mcdiarmid}, possible relaxations of the \gls{iid} requirement on the dataset at hand could also be explored.

%%%%%%%%%%%%%%%%%%%%%%%%%%%%%%%%%%%%%%%%%%%%
\bibliographystyle{IEEEtran}
\bibliography{FBstability}

%%%%%%%%%%%%%%%%%%%%%%%%%%%%%%%%%%%%%%%%%%%%%%%%%%%%%%%%%%%%%%%%%%%%%%%%%
% Bio

\ifTwoColumn
\begin{IEEEbiography}[{\includegraphics[width=1in,height=1.25in,clip,keepaspectratio]{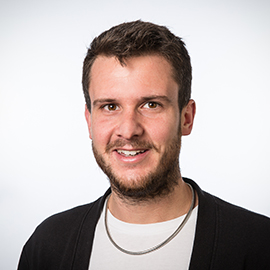}}]{Filippo Fabiani}
	is an Assistant Professor at the IMT School for Advanced Studies Lucca, Italy. He received the B.Sc. degree in Bio-Engineering, the M.Sc. degree in Automatic Control Engineering, and the Ph.D. degree in Automatic Control, all from the University of Pisa, in 2012, 2015, and 2019 respectively. In 2018-2019 he was post-doctoral Research Fellow in the Delft Center for Systems and Control at TU Delft, the Netherlands, while in 2019-2022 he was a post-doctoral Research Assistant in the Control Group at the Department of Engineering Science, University of Oxford, United Kingdom. He is currently an Associate Editor for the IEEE Control Systems Letters. His research interests include game theory, data-driven and robust control of uncertain systems, and machine learning, with applications in energy systems, smart grids and smart cities.
\end{IEEEbiography}

%\begin{IEEEbiography}[{\includegraphics[trim = 0 0.4in 0 0.2in, width=1in,height=1.25in,clip,keepaspectratio]{example-image-a}}]{Barbara Franci}[...]
%\end{IEEEbiography}

\begin{IEEEbiography}[{\includegraphics[trim = 0 0.4in 0 0.2in, width=1in,height=1.25in,clip,keepaspectratio]{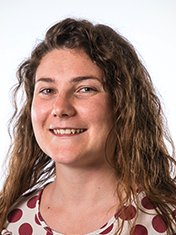}}]{Barbara Franci} is an Assistant Professor with the Department of Mathematical Sciences at Politecnico di Torino, Italy. She received her Bachelor's and Master's degrees in Pure Mathematics from University of Siena, Siena, Italy, respectively in 2012 and 2014. Then, she received her PhD from Politecnico di Torino and Universitá di Torino (joint program), Italy, in 2018. In September-December 2016 she visited the Department of Mechanical Engineering, University of California, Santa Barbara, USA. After the PhD, she was a PostDoc at the Delft Center for Systems and Control, Delft University of Technology, Delft, The Netherlands, until 2021. From 2021 till 2025, she was Assistant Professor with the Department of Advanced Computing Sciences at Maastricht University, The Netherlands. 
She was awarded in 2017 with the Quality Award by the Academic Board of Politecnico di Torino.
Her research interests are on game theory and its applications.
\end{IEEEbiography}

\vfill\null %prevents a big space between the bios
\else
\fi

\end{document}